\newcommand{\rd}{\mathrm{d}}
\newcommand{\rn}{\mathbb{R}^{N}}
\newcommand{\R}{\mathbb{R}}
\newcommand{\A}{\mathcal{A}}
\newcommand{\J}{\mathcal{J}}
\newcommand{\I}{\mathcal{I}}
\renewcommand{\O}{\Omega}
\newcommand{\lp}{L^p(\Omega)}
\renewcommand{\H}{\mathcal{H}}
\newcommand{\lpb}{L^p(\partial\Omega)}
\newcommand{\lqb}{L^q(\partial\Omega)}
\renewcommand{\wp}{W^{1,p}(\Omega )}
\newcommand{\ep}{\varepsilon}
\newtheorem{de}{Definition}[section]
\newtheorem{lem}[de]{Lemma}
\newtheorem{te}[de]{Theorem}
\newtheorem{co}[de]{Corollary}
\newtheorem{pr}[de]{Proposition}
\newtheorem{ob}[de]{Remark}
\begin{document}

\title[Nonlinear elastic membranes]{Some optimization problems for nonlinear elastic membranes}
\author[L. Del Pezzo and J. Fern\'andez Bonder]
{Leandro M. Del Pezzo and Juli\'an Fern\'andez Bonder}

\address{Leandro M. Del Pezzo \hfill\break\indent
Departamento  de Matem\'atica, FCEyN, Universidad de Buenos Aires,
\hfill\break\indent Pabell\'on I, Ciudad Universitaria (1428),
Buenos Aires, Argentina.}

\email{{\tt ldpezzo@dm.uba.ar}}

\address{Juli\'an Fern\'andez Bonder \hfill\break\indent
Departamento  de Matem\'atica, FCEyN, Universidad de Buenos Aires,
\hfill\break\indent Pabell\'on I, Ciudad Universitaria (1428),
Buenos Aires, Argentina.}

\email{{\tt jfbonder@dm.uba.ar}\hfill\break\indent {\it Web page:}
{\tt http://mate.dm.uba.ar/$\sim$jfbonder}}

\begin{abstract}
In this paper we study some optimization problems for nonlinear elastic
membranes. More precisely, we consider the problem of optimizing the cost
functional $\J(u)=\int_{\partial\Omega} f(x) u\, \rd \H^{N-1}$ over some
admissible class of loads $f$ where $u$ is the (unique) solution to the problem
$-\Delta_p u + |u|^{p-2}u = 0$ in $\Omega$ with $|\nabla u|^{p-2}u_\nu = f$ on
$\partial \Omega$.
\end{abstract}

\maketitle

\section{Introduction}

In this paper we analyze the following optimization problem: Consider a smooth
bounded domain $\Omega \subset \rn$ and some class of admissible loads $\A$. Then
we want to maximize the cost functional
$$
\J(f) := \int_{\partial\Omega} f(x) u\, \rd \H^{N-1},
$$
for $f\in \A$, where $\H^d$ denotes the $d-$dimensional Hausdorff measure and
$u$ is the (unique) solution to the nonlinear membrane problem with load $f$
\begin{equation}\label{1.1}
\begin{cases}
-\Delta_p u + |u|^{p-2}u = 0 & \text{in } \O, \\
|\nabla u|^{p-2}\frac{\partial u}{\partial \nu} = f & \text{on }\partial\O.
\end{cases}
\end{equation}
Here, $\Delta_p u = \text{div}(|\nabla u|^{p-2}\nabla u)$ is the usual
$p-$Laplacian and $\frac{\partial}{\partial\nu}$ is the outer unit normal
derivative.

\medskip

These types of optimization problems have been considered in the literature due
to many applications in science and engineering, specially in the linear case
$p=2$. See for instance \cite{Cherkaev}.

In recent years, models involving the $p-$Laplacian operator with nonlinear
boundary conditions have been used in the theory of quasiregular and
quasiconformal mappings in Riemannian manifolds with boundary (see \cite{7,
16}), non-Newtonian fluids, reaction diffusion problems, flow through porus
media, nonlinear elasticity, glaciology, etc. (see \cite{1, 2, 3, 6}).

We want to stress that our results are new, even in the linear case. But since
our arguments are mainly variational, and for the sake of completeness, we
decided to present the paper in this generality.

\medskip

In this work, we have chosen three different classes of admissible functions
$\A$ to work with.
\begin{itemize}
\item The class of rearrangements of a given function $f_0$.

\item The (unit) ball in some $L^q$.

\item The class of characteristic functions of sets of given surface measure.
\end{itemize}

This latter case is what we believe is the most interesting one and where our
main results are obtained.

For each of these classes, we prove existence of a maximizing load (in the
respective class) and analyze properties of these maximizers.

The approach to the class of rearrangements follows the lines of \cite{cuccu},
where a similar problem was analyzed, namely, the maximization of the
functional
$$
\bar \J(g):= \int_{\Omega} g u\, \rd \H^N,
$$
where $u$ is the solution to $-\Delta_p u = g$ in $\Omega$ with Dirichlet
boundary conditions.

When we work in the unit ball of $L^q$ the problem becomes trivial and we
explicitly find the (unique) maximizer for $\J$, namely, the first
eigenfunction of a Steklov-like nonlinear eigenvalue problem (see Section 4).

Finally we arrive at the main part of the paper, namely, the class of
characteristic functions of sets of given boundary measure. In order to work
within this class, we first relax the problem and work with the weak* closure
of the characteristic functions (i.e. bounded functions of given $L^1$ norm),
prove existence of a maximizer within this relaxed class and then prove that
this optimizer is in fact a characteristic function. Then, in order to analyze
properties of this maximizer, we compute the first variation (or shape
derivative) with respect to perturbations on the set where the characteristic
function is supported.

This approach for optimization problems has been used several times in
the literature. Just to cite a few, see \cite{DPFBR, FBRW, Kurata} and
references therein. Also, our approach to the computation of the first
variation borrows ideas from \cite{GM}.

The paper is organized as follows. In Section 2 we include some preliminary
results, some of which are well known but we choose to include them in order to
make the paper self contained. In Section 3 we study the problem when the
admissible class of loads $\A$ is the class of rearrangements of a given
function $f_0$. In Section 4, we study the simpler case when $\A$ is the unit
ball in $L^q$. Finally, in Section 5, we analyze the case where $\A$ is the
class of characteristic functions of sets with given surface measure.

\section{Preliminaries}
\setcounter{equation}{0}

In this section we collect some well known results that will be used throughout
the paper.

\subsection{Results on rearrangements}

First, we recall some well known facts on rearrangements that will be needed in
Section 3.

\begin{de} Suppose $f:(X,\Sigma,\mu)\to\mathbb{R}^{+}$ and $g:(X',\Sigma',\mu')\to\mathbb{R}^{+}$
are measurable functions. We say $f$ and $g$ are rearrangements of
each other if and only if
$$
\mu(\{x\in X\colon f(x)\geq\alpha\})=\mu'(\{x\in X'\colon g(x)\geq\alpha\}), \quad
\forall\alpha\geq0.
$$
\end{de}

Now, given $f_0\in\ L^p(A)$, where $A\subset\rn$ with $\H^d(A)<\infty$, the set
of all rearrangements of $f_0$ is denoted by $\mathcal{R}_{f_0}$. Thus, for any
$f\in\mathcal{R}_{f_0},$ we have
$$
\H^d(\{x\in A:f(x)\geq\alpha\})=\H^d(\{x\in A:f_0(x)\geq\alpha\}), \quad
\forall\alpha\geq0.
$$

We will need the following Lemma, the proof of which can be found in
\cite{Burton}.

\begin{lem}\label{kempes} Let $f_0\in L^{p}(\partial\O)$ and $v\in
L^{p'}(\partial\O)$ such that $f_0,v\ge 0$. Then there exists
$\hat{f}\in\mathcal{R}_{f_0}$ such that
$$
\int_{\partial\O} \hat{f} v \, \rd \H^{N-1} =
\sup_{h\in\overline{\mathcal{R}}_{f_0}} \int_{\partial\O} h v \, \rd \H^{N-1}.
$$
\end{lem}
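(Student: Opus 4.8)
The plan is to combine a soft weak-compactness argument, which locates a maximizer inside the relaxed class $\overline{\mathcal{R}}_{f_0}$, with the Hardy--Littlewood inequality and an explicit construction, which upgrades that maximizer to a genuine rearrangement. Throughout I work under the standing assumption $1<p<\infty$, so that $L^p(\partial\O)$ is reflexive.

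First I would record that every $h\in\mathcal{R}_{f_0}$ satisfies $\|h\|_{L^p(\partial\O)}=\|f_0\|_{L^p(\partial\O)}$, since $h$ and $f_0$ share the same distribution function. Hence $\mathcal{R}_{f_0}$, and therefore its weak closure $\overline{\mathcal{R}}_{f_0}$, is a bounded subset of the reflexive space $L^p(\partial\O)$, so $\overline{\mathcal{R}}_{f_0}$ is weakly compact. Because $v\in L^{p'}(\partial\O)=(L^p(\partial\O))^{*}$, the functional $L(h):=\int_{\partial\O} hv\,\rd\H^{N-1}$ is weakly continuous, so it attains its supremum over $\overline{\mathcal{R}}_{f_0}$; moreover, as $\mathcal{R}_{f_0}$ is weakly dense in $\overline{\mathcal{R}}_{f_0}$, this supremum coincides with $\sup_{h\in\mathcal{R}_{f_0}}L(h)$.

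The heart of the matter is to exhibit a genuine rearrangement realizing this value. Here I would invoke the Hardy--Littlewood inequality: writing $f_0^{*}$ and $v^{*}$ for the decreasing rearrangements onto $[0,\H^{N-1}(\partial\O)]$, one has $L(h)\le\int_0^{\H^{N-1}(\partial\O)}f_0^{*}(s)v^{*}(s)\,\rd s$ for every $h\in\mathcal{R}_{f_0}$, and, by weak continuity of $L$, this bound passes to the weak closure and thus dominates $L$ on all of $\overline{\mathcal{R}}_{f_0}$. To attain equality I would use Ryff's representation to write $v=v^{*}\circ\sigma$ for a measure-preserving map $\sigma\colon\partial\O\to[0,\H^{N-1}(\partial\O)]$ and set $\hat f:=f_0^{*}\circ\sigma$. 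Then $\hat f$ is a rearrangement of $f_0$ (because $\sigma$ is measure preserving and $f_0^{*}$ is a rearrangement of $f_0$), it is comonotone with $v$, and a direct computation yields $\int_{\partial\O}\hat f\,v\,\rd\H^{N-1}=\int_0^{\H^{N-1}(\partial\O)}f_0^{*}v^{*}\,\rd s$, matching the upper bound. Consequently $\hat f\in\mathcal{R}_{f_0}$ maximizes $L$ over $\overline{\mathcal{R}}_{f_0}$, which is the desired conclusion.

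The step I expect to be the main obstacle is the explicit construction of $\hat f$ when $v$ has level sets of positive measure: on such a flat spot the map $\sigma$ is not uniquely determined, and one must verify its measurability and measure-preserving property carefully. This is precisely the content of Ryff's representation theorem, valid because $\H^{N-1}$ restricted to $\partial\O$ is nonatomic. Since the result is classical and only its conclusion is needed later, I would in practice cite \cite{Burton} for this construction rather than reproduce it in full.
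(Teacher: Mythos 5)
Your argument is correct, but be aware that the paper contains no proof of this lemma to compare against: the statement is quoted as a known result with a pointer to Burton--McLeod \cite{Burton}, whose argument runs through the identification of $\overline{\mathcal{R}}_{f_0}$ with the closed convex hull of $\mathcal{R}_{f_0}$ and an extreme-point analysis. Your route via the Hardy--Littlewood inequality and Ryff's representation is a legitimate, more constructive alternative: it produces the explicit maximizer $\hat f=f_0^*\circ\sigma$, comonotone with $v$. Note that your first paragraph is then redundant --- once $L(h)\le\int_0^{\H^{N-1}(\partial\O)}f_0^*v^*\,\rd s$ holds on $\mathcal{R}_{f_0}$, the half-space $\{L\le C\}$ is weakly closed and so contains $\overline{\mathcal{R}}_{f_0}$, and $\hat f\in\mathcal{R}_{f_0}$ attains the bound, which already proves the lemma without invoking reflexivity or weak compactness. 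The only delicate points are the ones you flag: Ryff's theorem must be transported from $[0,1]$ to $(\partial\O,\H^{N-1})$ via the isomorphism theorem for nonatomic standard finite measure spaces (this is exactly where the flat spots of $v$ are absorbed), and the push-forward identity for $\sigma$ is what simultaneously gives $\hat f\in\mathcal{R}_{f_0}$ and the equality $\int_{\partial\O}\hat f v\,\rd\H^{N-1}=\int_0^{\H^{N-1}(\partial\O)}f_0^*v^*\,\rd s$. Deferring those standard facts to \cite{Burton}, as you propose, is consistent with what the authors themselves do.
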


The following result can be easily deduced from \cite{ll} (Theorem 1.14 p.28).

\begin{te}[Bathtub Principle]\label{valdano}
Let $(\O,\Sigma,\mu)$ be a measurable space and let $f$ be a real-valued,
measurable function on $\O$ such that $\mu(\{ x:f(x)>t\})$ is finite for all
$t\in\mathbb{R}$. Let the number $G>0$ be given and define the class $\mathcal
C$ of measurable functions on $\O$ by
$$
\mathcal{C}=\big\{g\colon 0\leq g(x)\leq 1 \textrm{ for all } x \textrm{ and }
\int_\O g(x)\, \rd\mu = G\big\}.
$$
Then the maximization problem
$$
I=\sup_{g\in\mathcal{C}}\int_\O f(x)g(x)\, \rd\mu
$$
is solved by
\begin{equation}\label{lopez}
g(x)= \chi_{\{f>s\}}(x) + c \chi_{\{f=s\}}(x),
\end{equation}
where
$$
s=\inf\{t:\mu(\{f\geq t\})\leq G\}
$$
and
$$
c\mu(\{f=s\})=G-\mu(\{f>s\}).
$$
The maximizer given in \eqref{lopez} is unique if $G=\mu(\{f>s\})$ or if
$G=\mu(\{f\geq s\}).$
\end{te}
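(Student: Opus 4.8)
The plan is to verify directly that the stated function---call it $g^{*}$ to distinguish it from a competitor $g$---is admissible and then beats every other element of $\mathcal{C}$, via a single pointwise comparison.

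First I would check that $g^{*}\in\mathcal{C}$, which reduces to showing $c\in[0,1]$. Here the hypothesis that $\mu(\{f>t\})$ is finite for every $t$ is used to invoke continuity of $\mu$ along the monotone families $\{f\ge t\}$. Letting $t\downarrow s$ through values $t>s$, each of which satisfies $\mu(\{f\ge t\})\le G$ by definition of the infimum, continuity from below gives $\mu(\{f>s\})\le G$; letting $t\uparrow s$ through values $t<s$, each with $\mu(\{f\ge t\})>G$, continuity from above (legitimate since these sets have finite measure) gives $\mu(\{f\ge s\})\ge G$. Hence $0\le G-\mu(\{f>s\})\le\mu(\{f=s\})$, so $c\in[0,1]$ and $\int_{\O}g^{*}\,\rd\mu=\mu(\{f>s\})+c\,\mu(\{f=s\})=G$.

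The heart of the argument is a pointwise inequality: for every $g\in\mathcal{C}$ one has $(f-s)g\le (f-s)g^{*}$ everywhere. Indeed, on $\{f>s\}$ one has $f-s>0$ and $g\le 1=g^{*}$; on $\{f<s\}$ one has $f-s<0$ and $g\ge 0=g^{*}$, so the left side is $\le 0$ while the right side is $0$; and on $\{f=s\}$ both sides vanish. Since $g^{*}$ is supported on $\{f\ge s\}$, the integrand $(f-s)g^{*}$ is nonnegative, so $\int_{\O}(f-s)g^{*}\,\rd\mu$ is well defined in $[0,\infty]$, and integrating the pointwise bound together with the constraint $\int_{\O}g\,\rd\mu=\int_{\O}g^{*}\,\rd\mu=G$ gives
\[
\int_{\O}f g\,\rd\mu=\int_{\O}(f-s)g\,\rd\mu+sG\le\int_{\O}(f-s)g^{*}\,\rd\mu+sG=\int_{\O}f g^{*}\,\rd\mu,
\]
so $g^{*}$ solves the maximization problem.

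For uniqueness, note that $G=\mu(\{f>s\})$ forces $c=0$ and $g^{*}=\chi_{\{f>s\}}$, while $G=\mu(\{f\ge s\})$ forces $c=1$ and $g^{*}=\chi_{\{f\ge s\}}$. In either case, if $g$ is another maximizer then $\int_{\O}(f-s)(g^{*}-g)\,\rd\mu=0$ with nonnegative integrand, hence $(f-s)(g^{*}-g)=0$ a.e.; as $f-s\neq 0$ off $\{f=s\}$ this pins down $g=g^{*}$ a.e. on $\{f\neq s\}$, and the mass constraint $\int_{\O}g\,\rd\mu=G$ then determines $g$ on $\{f=s\}$ as well. The step I expect to require the most care is not the comparison but the bookkeeping around integrability: since $f$ need not be bounded, $\int_{\O}fg\,\rd\mu$ and even the supremum $I$ may be $+\infty$, so I must avoid forming differences of possibly infinite quantities. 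The device that keeps everything legitimate is precisely that $g^{*}$ lives on $\{f\ge s\}$, making $(f-s)g^{*}\ge 0$; I would also dispose separately of the degenerate case $\mu(\{f=s\})=0$, where $c$ is immaterial and $g^{*}=\chi_{\{f>s\}}=\chi_{\{f\ge s\}}$.
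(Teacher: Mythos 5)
The paper does not prove this theorem at all: it simply records it as a known result and refers the reader to Lieb--Loss (Theorem 1.14, p.~28), so there is no in-paper argument to compare against. Your proof is correct and is essentially the standard argument from that reference: the continuity-of-measure computation showing $\mu(\{f>s\})\le G\le\mu(\{f\ge s\})$ (hence $c\in[0,1]$), the pointwise comparison $(f-s)g\le (f-s)g^{*}$ combined with the common mass $G$, and the equality analysis for uniqueness are exactly the right steps, and you correctly identify the only delicate point, namely that $(f-s)g^{*}\ge 0$ keeps all integrals well defined even when $I=+\infty$. The one tacit hypothesis worth flagging is that $G<\mu(\O)$ is needed for $s$ to be finite (for $t\to-\infty$ the sets $\{f\ge t\}$ exhaust $\O$), but this is implicit in the statement as used in the paper, where $G=A<\H^{N-1}(\partial\O)$.
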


\subsection{Results on differential geometry}

Now we state without proof some results on differential geometry that will be
used in the last section. The proof of these results can be found, for
instance, in \cite{Henrot}.

\begin{de}[Definition of the tangential Jacobian]
Let $\Omega\subset\R^N$ be a smooth open set of $\R^N$. Let $\Phi$ be a $C^1$
field over $\R^N$. We call the tangential Jacobian of $\Phi$
$$
J_\tau(\Phi):= |^T[\Phi']^{-1}\nu| J(\Phi),
$$
where $\nu$ is the outer unit normal vector to $\partial\Omega$, $\Phi'$
denotes the differential matrix of $\Phi$, $J(\Phi)$ is the usual Jacobian of
$\Phi$ and $^TA$ is the transpose of the matrix $A$.
\end{de}

The definition of the tangential Jacobian is suited to state the
following change of variables formula

\begin{pr}\label{cambio.variables} Let $f\in L^1(\Phi(\partial\Omega))$. Then $f\circ \Phi\in
L^1(\partial\Omega)$ and
$$
\int_{\Phi(\partial\Omega)} f\, \rd \H^{N-1} = \int_{\partial\Omega} (f\circ
\Phi) J_\tau(\Phi)\, \rd \H^{N-1}.
$$
\end{pr}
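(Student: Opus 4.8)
The plan is to reduce the identity to a purely local computation in a single boundary chart, where it becomes a linear-algebra statement about how $\Phi'$ rescales $(N-1)$-dimensional area on the tangent plane, and then to globalize via a partition of unity and to extend from continuous to $L^1$ data by density.

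First I would cover $\partial\O$ by finitely many local parametrizations $\psi_j\colon U_j\subset\R^{N-1}\to\partial\O$ (possible since $\O$ is smooth and $\partial\O$ compact) and fix a smooth partition of unity $\{\theta_j\}$ subordinate to the cover. Because $\Phi'$ is invertible and continuous on the compact set $\partial\O$, the quantity $J_\tau(\Phi)$ is continuous and bounded between two positive constants; hence $\Phi\circ\psi_j$ parametrizes the corresponding piece of $\Phi(\partial\O)$, and it suffices to prove the identity for a continuous $f$ supported in one chart. The $L^1$ case, together with the integrability claim $f\circ\Phi\in L^1(\partial\O)$, then follows from these uniform two-sided bounds on $J_\tau(\Phi)$ and density of continuous functions (or monotone convergence).

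In a single chart, the surface-area formula gives
$$\int_{\Phi(\psi(U))} f\, \rd\H^{N-1} = \int_U (f\circ\Phi)(\psi(y))\,\sqrt{\det\big(D(\Phi\circ\psi)^T D(\Phi\circ\psi)\big)}\, \rd y,$$
while the right-hand side of the proposition, written in the same chart, carries the area factor $\sqrt{\det(D\psi^T D\psi)}$. Since $D(\Phi\circ\psi)=\Phi'(\psi(y))\,D\psi(y)$, matching the two integrands reduces everything to the pointwise identity
$$\sqrt{\det\big((\Phi' T)^T(\Phi' T)\big)} = J_\tau(\Phi)\,\sqrt{\det(T^T T)}, \qquad T:=D\psi(y),$$
i.e. the factor by which the invertible matrix $A:=\Phi'$ magnifies $(N-1)$-area on the tangent plane $V=\mathrm{span}(\text{columns of }T)$ equals $|\det A|\,|{}^TA^{-1}\nu|$.

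The main (and essentially only) obstacle is this linear-algebra identity, which I would establish geometrically. Complete an orthonormal basis of $V$ by the unit normal $\nu$ to form a unit cube; its image under $A$ has $N$-volume $|\det A|$, which also equals the $(N-1)$-area of the image of the $V$-face times the height $|\langle A\nu,\tilde\nu\rangle|$, where $\tilde\nu$ is the unit normal to $A(V)$. Since $A^T\tilde\nu\perp V$ forces $\tilde\nu$ to be parallel to ${}^TA^{-1}\nu$, normalization gives $|\langle A\nu,\tilde\nu\rangle|=1/|{}^TA^{-1}\nu|$, whence the area magnification factor is $|\det A|\,|{}^TA^{-1}\nu|=J_\tau(\Phi)$. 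Substituting this back into the chart computation, multiplying by $\theta_j$ and summing over $j$ yields the desired formula, the integrability of $f\circ\Phi$ being immediate from the uniform bound on $J_\tau(\Phi)$.
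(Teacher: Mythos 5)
The paper does not actually prove this proposition: it is stated in Section 2 among results ``without proof'', with a pointer to the book of Henrot and Pierre, so there is no argument of the authors to compare yours against. That said, your proof is correct and is essentially the standard one. The reduction via charts and a partition of unity to the pointwise identity $\sqrt{\det\bigl((AT)^{T}(AT)\bigr)}=|\det A|\,|{}^{T}A^{-1}\nu|\,\sqrt{\det(T^{T}T)}$ is the right move, and your geometric derivation of that identity is sound: the unit $N$-cube built from an orthonormal basis of $V$ and $\nu$ has image of volume $|\det A|$, the normal to $A(V)$ must be proportional to ${}^{T}A^{-1}\nu$ because $\langle\tilde\nu,Av\rangle=\langle A^{T}\tilde\nu,v\rangle$, and $\langle A\nu,{}^{T}A^{-1}\nu\rangle=\langle\nu,\nu\rangle=1$ gives the height $1/|{}^{T}A^{-1}\nu|$, so the $(N-1)$-area magnification is exactly $J_\tau(\Phi)$. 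Two small points deserve to be made explicit. First, the formula as stated (without a multiplicity function on the left) requires $\Phi$ to be injective on $\partial\Omega$; this is implicit in the paper, where $\Phi=\psi_t$ is a flow diffeomorphism and $[\Phi']^{-1}$ is assumed to exist, but your chart argument silently uses it when asserting that $\Phi\circ\psi_j$ parametrizes a piece of $\Phi(\partial\Omega)$. Second, in passing from continuous to $L^1$ data you should note that the two-sided bounds on $J_\tau(\Phi)$ make the surface measures on $\partial\Omega$ and $\Phi(\partial\Omega)$ mutually absolutely continuous under $\Phi$, so that $f\circ\Phi$ is well defined $\H^{N-1}$-a.e.\ for an equivalence class $f\in L^1(\Phi(\partial\Omega))$; with that observation the monotone convergence (or density) step closes without difficulty.
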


\begin{de}[Definition of the tangential divergence]
Let $W$ be a $C^1$ vector field defined on $\R^N$. The tangential divergence of
$W$ over $\partial\Omega$ is defined as
$$
\rm{div}_\tau W := \rm{div} W - \langle W' \nu,\nu\rangle,
$$
where $\nu$ is the outer unit normal vector to $\partial\Omega$ and $\langle
\cdot,\cdot\rangle$ is the usual scalar product in $\R^N$.
\end{de}

With these definitions, we have the following version of the divergence
Theorem.

\begin{te}
Let $\Omega$ be a bounded smooth open set of $\R^N$, $D\subset\partial\Omega$
be a (relatively) open smooth set. Let $W$ be a $[W^{1,1}(\partial\Omega)]^N$
vector field. Then
$$
\int_D \rm{div}_\tau W\, \rd\H^{N-1} = \int_{\partial D} \langle W,
\nu_\tau\rangle\, \rd\H^{N-2} + \int_D H \langle W,\nu\rangle\, \rd\H^{N-1},
$$
where $\nu_\tau$ is the outer unit normal vector to $D$ along $\partial\Omega$
and $H$ is the mean curvature of $\partial\Omega$.
\end{te}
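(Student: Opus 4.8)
The plan is to reduce the statement to the classical (intrinsic) divergence theorem on the manifold-with-boundary $D$ by splitting $W$ into its tangential and normal parts. Writing $\nu$ for a smooth unit extension of the outer normal to a neighborhood of $\partial\Omega$ (for instance as $\nabla d$ with $d$ the signed distance) and setting $\phi := \langle W,\nu\rangle$, I decompose $W = W_\tau + \phi\,\nu$ with $W_\tau := W - \phi\,\nu$ tangent to $\partial\Omega$. Since every term in the statement is linear in $W$, I would treat the two pieces separately, and I would first prove the formula for smooth $W$, recovering the $[W^{1,1}(\partial\Omega)]^N$ case at the end by approximating $W$ in $W^{1,1}$ by smooth fields and passing to the limit in all three integrals, each of which depends continuously on $W$ for this convergence.

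For the normal part I would compute $\mathrm{div}_\tau(\phi\nu)$ directly from the definition. Using $\mathrm{div}(\phi\nu)=\phi\,\mathrm{div}\,\nu+\langle\nabla\phi,\nu\rangle$ together with the matrix identity $(\phi\nu)'=\nu\otimes\nabla\phi+\phi\,\nu'$, one obtains $\langle(\phi\nu)'\nu,\nu\rangle=\langle\nabla\phi,\nu\rangle+\phi\langle\nu'\nu,\nu\rangle$. The last term vanishes because $|\nu|\equiv 1$ forces $\langle\nu'\nu,\nu\rangle=0$, so that
$$
\mathrm{div}_\tau(\phi\nu)=\phi\,\mathrm{div}\,\nu=\phi\,\mathrm{div}_\tau\nu=H\langle W,\nu\rangle,
$$
recalling $H=\mathrm{div}_\tau\nu$. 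This accounts for exactly the mean curvature term. Moreover, since $\nu_\tau$ is tangent to $\partial\Omega$ we have $\langle\nu,\nu_\tau\rangle=0$, so the normal part contributes nothing to the conormal boundary integral $\int_{\partial D}\langle W,\nu_\tau\rangle\,\rd\H^{N-2}$.

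For the tangential part I would show that the extrinsic tangential divergence agrees with the intrinsic surface divergence, i.e. $\mathrm{div}_\tau W_\tau=\mathrm{div}_g W_\tau$, where $g$ is the metric induced on $\partial\Omega$; this is a local computation carried out in a parametrization of $\partial\Omega$. Once this identification is in place, the classical divergence theorem on the Riemannian manifold-with-boundary $D$ gives $\int_D\mathrm{div}_\tau W_\tau\,\rd\H^{N-1}=\int_{\partial D}\langle W_\tau,\nu_\tau\rangle\,\rd\H^{N-2}$, and since $\langle W_\tau,\nu_\tau\rangle=\langle W,\nu_\tau\rangle$ (again because $\nu_\tau\perp\nu$) this is precisely the boundary term of the statement, with no curvature contribution. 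Adding the tangential and normal contributions yields the claimed identity.

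I expect the main obstacle to be the tangential part: verifying in local coordinates that the ambient expression $\mathrm{div}\,W_\tau-\langle W_\tau'\nu,\nu\rangle$ coincides with the intrinsic divergence $\tfrac{1}{\sqrt{\det g}}\,\partial_i\big(\sqrt{\det g}\,W_\tau^i\big)$, so that Stokes' theorem on $D$ applies cleanly without spurious boundary or curvature terms. The bookkeeping with the induced metric and the associated Christoffel symbols is where the real work lies; by contrast the normal part and the density argument are routine.
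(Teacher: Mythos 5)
This theorem appears in the paper's preliminaries, where the authors explicitly state it \emph{without proof}, referring to Henrot--Pierre \cite{Henrot}; so there is no in-paper argument to compare against. Your proposal is correct and is essentially the standard proof found in that reference: split $W=W_\tau+\langle W,\nu\rangle\nu$, check that the normal part produces exactly the curvature term $\int_D H\langle W,\nu\rangle\,\rd\H^{N-1}$ and nothing on $\partial D$, and reduce the tangential part to the intrinsic divergence theorem on $D$. All the individual steps are sound, including the vanishing of $\langle \nu'\nu,\nu\rangle$ from $|\nu|\equiv1$ and the density argument (note only that the boundary integral requires the continuity of the trace $W^{1,1}(\partial\Omega)\to L^1(\partial D)$, which holds since $D$ is smooth). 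The one place where you anticipate heavy bookkeeping --- the identity $\mathrm{div}_\tau W_\tau=\mathrm{div}_g W_\tau$ --- can be dispatched without Christoffel symbols: for any orthonormal frame $e_1,\dots,e_{N-1},\nu$ of $\R^N$ along $\partial\Omega$ one has $\mathrm{div}\,X=\sum_i\langle X'e_i,e_i\rangle+\langle X'\nu,\nu\rangle$, so $\mathrm{div}_\tau X=\sum_i\langle X'e_i,e_i\rangle$; for tangent $X$ this is the trace of the tangential projection of the ambient derivative, i.e.\ the intrinsic divergence, since the Levi-Civita connection of the induced metric is that projection.
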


\section{Maximizing in the class of rearrangements}
\setcounter{equation}{0}

Given a domain $\O\subset\rn$ (bounded, connected, with smooth
boundary), first we want to study the following problem
\begin{equation}\label{bati}
\begin{cases}
-\Delta_p u + |u|^{p-2} u = 0 & \textrm{in }  \O,\\
|\nabla u|^{p-2}\frac{\partial u}{\partial\nu}= f & \textrm{on }
\partial\O.
\end{cases}
\end{equation}
Here $p\in(1,\infty)$, $\Delta_p u = {\rm div}(|\nabla u|^{p-2}\nabla u)$ is
the usual $p-$Laplacian, $\frac{\partial }{\partial\nu}$ is the outer normal
derivative and $f\in L^{q}(\partial \O)$ with $q>\frac{p'}{N'}$ .

We say $u\in\wp$ is a weak solution of \eqref{bati} if
$$
\int_\O |\nabla u|^{p-2}\nabla u\nabla v + |u|^{p-2}uv \, \rd \H^N =
\int_{\partial\O}fv \, \rd \H^{N-1}
$$
for all $v\in\wp$.

The restriction $q>\frac{p'}{N'}$ is related to the fact that $\frac{p'}{N'} =
p_*'$ where $p_* = p(N-1)/(N-p)$ is the critical exponent in the Sobolev trace
imbedding $W^{1,p}(\Omega)\hookrightarrow L^r(\partial\Omega)$. So, in order for that the right side of last equality to make sense for $f\in \lqb$ we need  $v$ to
belong to $L^{q'}(\Omega)$. This is achieved by the restriction $q'<p_*$.

It is a standard result that \eqref{bati}  has a unique weak solution $u_{f}$,
for which the following equations hold
\begin{equation}\label{rojas}
\int_{\partial\O}fu_{f} \, \rd \H^{N-1} = \sup_{u\in\wp} \I(u),
\end{equation}
where
$$
\I(u)=\frac{1}{p-1}\Big\{ p\int_{\partial\O} f u \, \rd \H^{N-1} - \int_\O
|\nabla u|^p + |u|^p \, \rd \H^N \Big\}.
$$

Let $f_0\in\lqb$, with $q=p/(p-1),$ and let $\mathcal{R}_{f_0}$ be the class of
rearrangements of $f_0$. We are interested in finding
\begin{equation}\label{cani}
\sup_{f\in \mathcal{R}_{f_0}} \int_{\partial\O} fu_{f} \, \rd \H^{N-1}.
\end{equation}

\begin{te}\label{messi}
There exists $\hat{f}\in \mathcal{R}_{f_0}$ such that
$$
\J(\hat f) = \int_{\partial\O} \hat{f}\hat{u} \, \rd \H^{N-1} = \sup_{f\in
\mathcal{R}_{f_0}} \J(f) = \sup_{f\in \mathcal{R}_{f_0}} \int_{\partial\O}
fu_{f} \, \rd \H^{N-1},
$$
where $\hat{u}=u_{\hat{f}}$.
\end{te}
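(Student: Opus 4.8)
The plan is to show existence of a maximizer by combining the variational characterization~\eqref{rojas} with the rearrangement Lemma~\ref{kempes}, via an iteration (or fixed-point) argument that first maximizes a \emph{linear} functional and then checks that the maximizer of the linear problem is also optimal for the nonlinear functional $\J$. The key structural observation is that, although $\J(f)=\int_{\partial\O} f u_f\,\rd\H^{N-1}$ is nonlinear in $f$ (since $u_f$ depends on $f$), for a \emph{fixed} $u$ the functional $f\mapsto \int_{\partial\O} f u\,\rd\H^{N-1}$ is linear, and this is exactly the type of functional to which Lemma~\ref{kempes} applies.

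First I would set up the following scheme. Starting from any admissible $f\in\mathcal{R}_{f_0}$, let $u_f$ be its solution and apply Lemma~\ref{kempes} (with $v=u_f$, assuming for now $f_0,u_f\ge 0$) to produce a new rearrangement $\tilde f\in\mathcal{R}_{f_0}$ maximizing $\int_{\partial\O} h u_f\,\rd\H^{N-1}$ over $h\in\overline{\mathcal{R}}_{f_0}$. Iterating $f\mapsto\tilde f$ generates a sequence $(f_n)$ along which the values $\J(f_n)$ are nondecreasing: indeed, using the variational characterization one shows
$$
\J(f_{n+1})=\int_{\partial\O} f_{n+1} u_{f_{n+1}}\,\rd\H^{N-1}\ge \int_{\partial\O} f_{n+1} u_{f_n}\,\rd\H^{N-1}\ge \int_{\partial\O} f_{n} u_{f_n}\,\rd\H^{N-1}=\J(f_n),
$$
where the first inequality comes from~\eqref{rojas} (the solution $u_{f_{n+1}}$ beats the competitor $u_{f_n}$ in $\I$, which after rearranging the energy identity yields the stated bound) and the second from the maximizing property of $f_{n+1}$ against the fixed weight $u_{f_n}$. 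Next I would extract a weak-$L^q$ limit: since $\mathcal{R}_{f_0}$ is bounded in $\lqb$, a subsequence of $(f_n)$ converges weakly in $\lqb$ to some $\hat f\in\overline{\mathcal{R}}_{f_0}$, and by continuity of the solution map $f\mapsto u_f$ (weak $\lqb$ convergence of loads implies, by the a priori estimates and compactness of the trace imbedding, strong $\wp$ and hence strong $\lpb$-trace convergence of solutions) one passes to the limit to obtain $\J(\hat f)=\sup_{f\in\overline{\mathcal{R}}_{f_0}}\J(f)$.

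The main obstacle, and the last step, is to show that the limit $\hat f$ actually lies in $\mathcal{R}_{f_0}$ itself and not merely in the weak closure $\overline{\mathcal{R}}_{f_0}$. For this I would argue that $\hat f$ maximizes the \emph{linear} functional $h\mapsto\int_{\partial\O} h\hat u\,\rd\H^{N-1}$ over $\overline{\mathcal{R}}_{f_0}$ (where $\hat u=u_{\hat f}$): if some $g\in\overline{\mathcal{R}}_{f_0}$ gave a strictly larger value, then replacing $\hat f$ by the rearrangement supplied by Lemma~\ref{kempes} would strictly increase $\J$, contradicting optimality. Lemma~\ref{kempes} then furnishes a genuine rearrangement attaining this linear supremum, and a uniqueness/coincidence argument (using that the maximizer of the linear functional with weight $\hat u$ is essentially a nonincreasing rearrangement along the level sets of $\hat u$, so it is determined up to the rearrangement class) identifies $\hat f$ with that rearrangement, placing $\hat f\in\mathcal{R}_{f_0}$. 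Finally I would dispense with the sign hypothesis $f_0\ge 0$: the energy identity~\eqref{rojas} shows $\J(f)=\int_{\partial\O} f u_f\ge 0$ with the integrand structurally favoring $f$ and $u_f$ of the same sign, so one may reduce to the nonnegative case without loss of generality, which is exactly the situation Lemma~\ref{kempes} is designed to handle.
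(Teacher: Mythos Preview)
Your outline has two genuine gaps, and both stem from making the argument more complicated than necessary.

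\textbf{First gap: the iteration does not produce a maximizing sequence.} You start from an arbitrary $f\in\mathcal{R}_{f_0}$ and iterate $f\mapsto\tilde f$, obtaining a nondecreasing sequence of values $\J(f_n)$. These values converge to some limit $L$, but nothing in your argument forces $L=\sup_{f\in\mathcal{R}_{f_0}}\J(f)$. The scheme could get stuck at a fixed point that is not globally optimal. Consequently the claim ``one passes to the limit to obtain $\J(\hat f)=\sup_{f\in\overline{\mathcal{R}}_{f_0}}\J(f)$'' is unjustified. (Also, a minor point: your derivation of the first inequality $\int f_{n+1}u_{f_{n+1}}\ge\int f_{n+1}u_{f_n}$ actually needs the second inequality $\int f_{n+1}u_{f_n}\ge\int f_n u_{f_n}$ already in hand, so the two steps are not independent as written.)

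\textbf{Second gap: the ``uniqueness/coincidence'' step is not available.} To place the weak limit $\hat f$ inside $\mathcal{R}_{f_0}$ you invoke a uniqueness statement for maximizers of $h\mapsto\int_{\partial\O} h\hat u\,\rd\H^{N-1}$ over $\overline{\mathcal{R}}_{f_0}$. Lemma~\ref{kempes} only asserts existence of a maximizer in $\mathcal{R}_{f_0}$; it says nothing about uniqueness, and in fact uniqueness fails whenever $\hat u$ has level sets of positive $\H^{N-1}$-measure on $\partial\O$. So you cannot conclude that $\hat f$ coincides with the rearrangement furnished by the lemma.

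The paper's proof sidesteps both issues by working directly with a maximizing sequence rather than an iteration. One extracts weak limits $f$ (in $\lqb$) and $u$ (in $\wp$, hence strongly in $\lpb$), and uses weak lower semicontinuity of the $\wp$-norm inside the formula for $\I$ to get
$$
I\le \frac{1}{p-1}\Big\{p\int_{\partial\O} f u\,\rd\H^{N-1}-\int_\O|\nabla u|^p+|u|^p\,\rd\H^N\Big\}.
$$
At this point one does \emph{not} try to show that the weak limit $f$ is itself a rearrangement. Instead, Lemma~\ref{kempes} supplies a genuine $\hat f\in\mathcal{R}_{f_0}$ with $\int_{\partial\O}\hat f u\ge\int_{\partial\O} f u$, and then~\eqref{rojas} applied with load $\hat f$ gives $I\le\J(\hat f)\le I$. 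No uniqueness, no iteration, no continuity of the solution map are needed.
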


\begin{proof}
Let
$$
I = \sup_{f\in \mathcal{R}_{f_0}} \int_{\partial\O} fu_{f} \, \rd \H^{N-1}.
$$
We first show that $I$ is finite. Let $f\in \mathcal{R}_{f_0}$. By
H\"{o}lder's inequality and the trace embedding we have
$$
\int_\O |\nabla u_{f}|^p + |u_{f}|^p \, \rd \H^N \leq C \|f\|_{\lqb}
\|u_{f}\|_{\wp},
$$
then
\begin{equation}\label{tevez}
\|u_{f}\|_{\wp}\leq C \quad \forall f\in \mathcal{R}_{f_0}
\end{equation}
since $\|f\|_{\lqb}=\|f_0\|_{\lqb}$ for all $f\in \mathcal{R}_{f_0}$. Therefore
$I$ is finite.

Now, let $\{f_i\}_{i\ge 1}$ be a maximizing sequence and let $u_i = u_{f_i}$.
From \eqref{tevez} it is clear that $\{u_i\}_{i\ge 1}$ is bounded in $\wp$,
then there exists a function $u\in\wp$ such that, for a subsequence that we
still call $\{u_i\}$,
\begin{eqnarray*}
u_i&\rightharpoonup&u \quad \textrm{weakly in } \wp,\\
u_i&\to&u \quad \textrm{strongly in } \lp,\\
u_i&\to&u \quad \textrm{strongly in } \lpb.
\end{eqnarray*}
On the other hand, since $\{f_i\}_{i\ge 1}$ is bounded in $\lqb$, we may choose
a subsequence, still denoted by $\{f_i\}_{i\ge 1}$, and $f\in\lqb$ such that
$$
f_i\rightharpoonup f \quad \textrm{weakly in } \lqb.
$$
Then
\begin{eqnarray*}
I & = & \lim_{i\to\infty} \int_{\partial\O}f_iu_i \, \rd \H^{N-1}\\
 & = & \frac{1}{p-1}\lim_{i\to\infty}\Big\{ p \int_{\partial\O} f_i u_i \, \rd \H^{N-1}
 - \int_\O |\nabla u_i|^p + |u_i|^p \, \rd \H^N \Big\}\\
 & \leq & \frac{1}{p-1}\Big\{ p\int_{\partial\O} f u \, \rd \H^{N-1}
 - \int_\O |\nabla u|^p + |u|^p \, \rd \H^N \Big\}.
\end{eqnarray*}
Furthermore, by Lemma \ref{kempes}, there exists $\hat{f}\in \mathcal{R}_{f_0}$
such that
$$
\int_{\partial\O} f u \, \rd \H^{N-1}\leq \int_{\partial\O} \hat{f} u \, \rd
\H^{N-1}.
$$
Thus
$$
I \leq \frac{1}{p-1}\Big\{ p\int_{\partial\O}\hat{f} u \, \rd \H^{N-1} -
\int_\O |\nabla u|^p + |u|^p \, \rd \H^N \Big\}.
$$
As a consequence of \eqref{rojas}, we have that
\begin{eqnarray*}
I & \leq & \frac{1}{p-1}\Big\{ p\int_{\partial\O}\hat{f} u \, \rd \H^{N-1}
- \int_\O |\nabla u|^p + |u|^p \, \rd \H^N \Big\}\\
 & \leq & \frac{1}{p-1}\Big\{ p\int_{\partial\O}\hat{f} \hat{u} \, \rd \H^{N-1}
- \int_\O |\nabla \hat{u}|^p + |\hat{u}|^p \, \rd \H^N \Big\}\\
 & = & \int_{\partial\O}\hat{f} \hat{u} \, \rd \H^{N-1}\\
 & \leq & I.
\end{eqnarray*}
Recall that $\hat{u}=u_{\hat{f}}.$ Therefore $\hat{f}$ is a solution to
\eqref{cani}. This completes the proof.
\end{proof}

\begin{ob}
With a similar proof we can prove a slighter stronger result. Namely, we can
consider the functional
$$
\J_1(f,g) := \int_\O gu\, \rd \H^N + \int_{\partial\O} f u\, \rd \H^{N-1},
$$
where $u$ is the (unique, weak) solution to
$$
\begin{cases}
-\Delta_p u + |u|^{p-2}u = g & \mbox{in }\Omega,\\
|\nabla u|^{p-2}\frac{\partial u}{\partial \nu} = f & \mbox{on }\partial\Omega,
\end{cases}
$$
and consider the problem of maximizing $\J_1$ over the class
$\mathcal{R}_{g_0}\times \mathcal{R}_{f_0}$ for some fixed $g_0$ and $f_0$.

We leave the details to the reader.
\end{ob}

\section{Maximizing in the unit ball of $L^q$}
\setcounter{equation}{0}

In this section we consider the optimization problem
$$
\max \J(f)
$$
where the maximum is taken over the unit ball in $L^q(\partial \Omega)$.

In this case, the answer is simple and we find that the maximizer can be
computed explicitly in terms of the extremal of the Sobolev trace embedding.

So, we let $f\in\lqb,$ with $q>\frac{p'}{N'}$, and $\|f\|_{\lqb}\leq 1$, we
consider the problem
\begin{equation}\label{delgado}
\sup_{f\in{\lqb}\atop\|f\|_{\lqb}\leq 1}\int_{\partial\O}fu_f\,\rd \H^N,
\end{equation}
where $u_f$ is the weak solution of
\begin{equation}\label{crespo}
\begin{cases}
-\Delta_p u + |u|^{p-2} u = 0 & \textrm{in }  \O,\\
|\nabla u|^{p-2}\frac{\partial u}{\partial\nu}= f & \textrm{on }
\partial\O.
\end{cases}
\end{equation}
The restriction $q>\frac{p'}{N'}$ is the same as in the previous section.

In this case it is easy to see that the solution becomes $\hat f =
v_{q'}^{q'-1}$ where $v_{q'}\in \wp$ is a nonnegative extremal for $S_{q'}$
normalized such that $\|v_{q'}\|_{L^{q'}(\partial\Omega)}=1$ and $S_{q'}$ is
the Sobolev trace constant given by
$$
S_{q'}=\inf_{v\in\wp}\frac{\int_\O |\nabla v|^p + |v|^p \, \rd \H^N}
{\big(\int_{\partial\O} |v|^{q'} \, \rd \H^{N-1}\big)^\frac{p}{q'}}.
$$
Furthermore $\hat u = u_{\hat f} = \frac{1}{S_{q'}^{1/p-1}} v_{q'}.$ Observe
that, as $q'<p_*$ there exists an extremal for $S_{q'}$. See \cite{FBR} and
references therein.

In fact
\begin{align*}
\J(\hat f) &= \int_{\partial \Omega} \hat f \hat u\, \rd \H^{N-1} =
\int_{\Omega} |\nabla \hat u|^p + |\hat u|^p\, \rd \H^N\\
&= \frac{1}{S_{q'}^{p/(p-1)}} \int_{\Omega} |\nabla v_{q'}|^p + |v_{q'}|^p\,
\rd \H^N = \frac{1}{S_{q'}^{1/(p-1)}}.
\end{align*}

On the other hand, given $f\in \lqb$, such that $\|f\|_{\lqb}\le 1$, we have
\begin{align*}
\J(f) &= \int_{\partial\Omega} f u_f\,\rd\H^{N-1} \le \|f\|_{\lqb}
\|u_f\|_{L^{q'}(\partial\Omega)} \\
&\le \Big( \frac{1}{S_{q'}} \int_{\Omega} |\nabla u_f|^p + |u_f|^p\,
\rd\H^N\Big)^{1/p} = \frac{1}{S_{q'}^{1/p}} \Big(\int_{\partial\Omega} f
u_f\,\rd\H^{N-1}\Big)^{1/p},
\end{align*}
{}from which it follows that
$$
\J(f)\le \frac{1}{S_{q'}^{1/(p-1)}}.
$$
This completes the characterization of the optimal load in this case.

\section{Maximizing in $L^\infty$}
\setcounter{equation}{0}

Now we consider the problem
\begin{equation}\label{cruz}
\sup_{\phi\in\mathbf{B}}\int_{\partial\O}\phi u_\phi\,\rd \H^{N-1},
\end{equation}
where $\mathbf{B}:=\{\phi : 0\leq\phi(x)\leq1\textrm{ for all } x\in\partial\O
\textrm{ and } \int_{\partial\O}\phi\,\rd \H^{N-1} = A\},$ for some fixed
$0<A<\H^{N-1}(\partial\O),$ and $u_{\phi}$ is the weak solution of

\begin{equation}\label{palermo}
\begin{cases}
-\Delta_p u + |u|^{p-2} u = 0 & \textrm{in }  \O,\\
|\nabla u|^{p-2}\frac{\partial u}{\partial\nu}= \phi & \textrm{on }
\partial\O.
\end{cases}
\end{equation}

This is the most interesting case considered in this paper.

\subsection{Existence of optimal configurations}

In this case, we have the following theorem:

\begin{te}\label{ferreyra} There exists $D\subset\partial\O$ with $\H^{N-1}(D)=A$ such that
$$
\int_{\partial\O}\chi_D u_D\,\rd \H^{N-1} = \sup_{\phi\in\mathbf{B}}
\int_{\partial\O}\phi u_\phi\,\rd \H^{N-1},
$$
where $u_D = u_{\chi_D}.$
\end{te}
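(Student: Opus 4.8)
The plan is to follow the relaxation scheme announced in the introduction: first produce a maximizer $\phi$ in the whole relaxed class $\mathbf{B}$ by the direct method, then observe that any such $\phi$ must maximize a \emph{linear} functional, and finally invoke the Bathtub Principle (Theorem~\ref{valdano}) to exhibit a genuine characteristic function achieving the same value.

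First I would establish existence of a maximizer $\phi\in\mathbf{B}$ for \eqref{cruz}, essentially repeating the argument of Theorem~\ref{messi}. Writing $I$ for the supremum in \eqref{cruz} and taking a maximizing sequence $\phi_i\in\mathbf{B}$ with $u_i=u_{\phi_i}$, the a priori bound coming from the weak formulation of \eqref{palermo} shows $\{u_i\}$ is bounded in $\wp$ (here one uses $\|\phi_i\|_{L^\infty}\le 1$ and $\H^{N-1}(\partial\O)<\infty$). Passing to subsequences, $u_i\rightharpoonup u$ weakly in $\wp$ and strongly in $L^p(\partial\O)$, while $\phi_i\rightharpoonup\phi$ weak${}^*$ in $L^\infty(\partial\O)$. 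The set $\mathbf{B}$ is weak${}^*$ closed: the bounds $0\le\phi\le 1$ pass to the limit, and testing against the constant function $1\in L^1(\partial\O)$ preserves $\int_{\partial\O}\phi\,\rd\H^{N-1}=A$, so $\phi\in\mathbf{B}$. Since $\phi_i$ is bounded in $L^\infty$ and $u_i\to u$ in $L^1(\partial\O)$, we get $\int_{\partial\O}\phi_i u_i\,\rd\H^{N-1}\to\int_{\partial\O}\phi u\,\rd\H^{N-1}$, and by weak lower semicontinuity of $\int_\O|\nabla u|^p+|u|^p\,\rd\H^N$, passing to the limit in the identity \eqref{rojas} for $\int_{\partial\O}\phi_i u_i$ gives $I\le\frac{1}{p-1}\{p\int_{\partial\O}\phi u\,\rd\H^{N-1}-\int_\O|\nabla u|^p+|u|^p\,\rd\H^N\}\le\J(\phi)\le I$. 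Hence $\J(\phi)=I$ and $\phi$ is a relaxed maximizer, with $u=u_\phi$.

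The key observation is that this $\phi$ must maximize the \emph{linear} functional $\psi\mapsto\int_{\partial\O}\psi u\,\rd\H^{N-1}$ over $\mathbf{B}$. Indeed, if some $\psi\in\mathbf{B}$ gave a strictly larger value, then by \eqref{rojas} applied with load $\psi$ (and test function $u$) one would have $\J(\psi)\ge\frac{1}{p-1}\{p\int_{\partial\O}\psi u\,\rd\H^{N-1}-\int_\O|\nabla u|^p+|u|^p\,\rd\H^N\}>\frac{1}{p-1}\{p\int_{\partial\O}\phi u\,\rd\H^{N-1}-\int_\O|\nabla u|^p+|u|^p\,\rd\H^N\}=\J(\phi)=I$, contradicting $\psi\in\mathbf{B}$. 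Now $\mathbf{B}$ is exactly the class $\mathcal C$ of Theorem~\ref{valdano} with $G=A$ and $\mu$ the restriction of $\H^{N-1}$ to $\partial\O$, so applying the Bathtub Principle with $f=u$ identifies both the maximal value and the threshold $s=\inf\{t:\H^{N-1}(\{u\ge t\})\le A\}$.

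The main obstacle is that the Bathtub maximizer $\chi_{\{u>s\}}+c\,\chi_{\{u=s\}}$ is a genuine characteristic function only when $\H^{N-1}(\{u=s\})=0$. Rather than trying to prove this level set is null (which would require boundary $C^1$ regularity of $u$ together with a unique continuation argument), I would sidestep the difficulty using that $u\equiv s$ on $\{u=s\}$. Since $\H^{N-1}$ is nonatomic on the smooth manifold $\partial\O$ and $\H^{N-1}(\{u>s\})\le A\le\H^{N-1}(\{u\ge s\})$, I can choose $D'\subset\{u=s\}$ with $\H^{N-1}(D')=A-\H^{N-1}(\{u>s\})$ and set $D=\{u>s\}\cup D'$, so that $\H^{N-1}(D)=A$. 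Because $u=s$ on $D'$, a direct computation gives $\int_{\partial\O}\chi_D u\,\rd\H^{N-1}=\int_{\{u>s\}}u\,\rd\H^{N-1}+s\,\H^{N-1}(D')$, which coincides with the Bathtub maximal value and hence equals $\int_{\partial\O}\phi u\,\rd\H^{N-1}$. Feeding $\chi_D\in\mathbf{B}$ into \eqref{rojas} then yields $\J(\chi_D)\ge\frac{1}{p-1}\{p\int_{\partial\O}\chi_D u\,\rd\H^{N-1}-\int_\O|\nabla u|^p+|u|^p\,\rd\H^N\}=\frac{1}{p-1}\{p\int_{\partial\O}\phi u\,\rd\H^{N-1}-\int_\O|\nabla u|^p+|u|^p\,\rd\H^N\}=\J(\phi)=I$, while $\chi_D\in\mathbf{B}$ forces $\J(\chi_D)\le I$. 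Therefore $\J(\chi_D)=I$, and $D$ is the desired optimal set.
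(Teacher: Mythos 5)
Your proposal is correct and follows essentially the same route as the paper: the direct method produces a relaxed maximizer $\phi\in\mathbf{B}$ via weak/weak${}^*$ compactness and the variational characterization \eqref{rojas}, and the Bathtub Principle (Theorem~\ref{valdano}) then replaces $\phi$ by a characteristic function without decreasing the value. Your explicit construction of $D=\{u>s\}\cup D'$ with $D'\subset\{u=s\}$ just spells out a detail the paper leaves implicit when it asserts that Theorem~\ref{valdano} yields a set $D$ with $\H^{N-1}(D)=A$ and $\{t<u\}\subset D\subset\{t\le u\}$.
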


\begin{proof}
Let
$$
I=\sup_{\phi\in\mathbf{B}}\int_{\partial\O}\phi u_\phi\,\rd \H^{N-1}.
$$
Arguing as in the first part of the proof for Theorem \ref{messi} we have that $I$
is finite.

Next, let $\{\phi_i\}_{i\ge 1}$ be a maximizing sequence and let $u_i =
u_{\phi_i}.$ It is clear that $\{u_i\}_{i\ge 1}$ is bounded in $\wp,$ then
there exists a function $u\in\wp$ such that, for a subsequence that we still
call $\{u_i\}_{i\ge 1}$
\begin{eqnarray*}
u_i&\rightharpoonup&u \quad \textrm{weakly in } \wp,\\
u_i&\to&u \quad \textrm{strongly in } \lp,\\
u_i&\to&u \quad \textrm{strongly in } \lpb.
\end{eqnarray*}
On the other hand, since $\{\phi_i\}_{i\ge 1}$ is bounded in
$L^\infty(\partial\O)$, we may choose a subsequence, again denoted
$\{\phi_i\}_{i\ge 1}$, and  $\phi\in L^\infty(\partial\O)$ and such that
$$
\phi_i\stackrel{*}{\rightharpoonup}\phi \quad \textrm{weakly* in } L^\infty(\partial\O).\\
$$
Then
\begin{eqnarray*}
I & = & \lim_{i\to\infty}\int_{\partial\O}\phi_i u_i \, \rd \H^{N-1}\\
& = & \frac{1}{p-1}\lim_{i\to\infty}\Big\{ p\int_{\partial\O} \phi_i u_i \, \rd
\H^{N-1} - \int_\O |\nabla u_i|^p + |u_i|^p \, \rd \H^N \Big\}\\
& \leq & \frac{1}{p-1}\Big\{ p\int_{\partial\O} \phi u \, \rd \H^{N-1} -
\int_\O |\nabla u|^p + |u|^p \, \rd \H^N \Big\}.
\end{eqnarray*}
Furthermore, by Theorem \ref{valdano}, there exists $D\subset \partial\O$ with
$\H^{N-1}(D)=A$ such that
$$
\int_{\partial\O} \phi u \, \rd \H^{N-1} \leq
 \int_{\partial\O} \chi_D u \, \rd \H^{N-1},
$$
and
$$
\{t< u\}\subset D\subset\{t\le u\}, \quad t:=\inf\{s:\H^{N-1}(\{s<u\})<A\}.
$$
Thus
$$
I\leq\frac{1}{p-1}\Big\{ p\int_{\partial\O}\chi_D u \, \rd \H^{N-1} - \int_\O
|\nabla u|^p + |u|^p \, \rd \H^N \Big\}.
$$
As a consequence of \eqref{rojas}, we have that
\begin{eqnarray*}
I & \leq & \frac{1}{p-1}\Big\{p\int_{\partial\O}\chi_D u \, \rd \H^{N-1} -
\int_\O |\nabla u|^p + |u|^p \, \rd \H^N \Big\}\\
& \leq & \frac{p}{p-1}\Big\{p\int_{\partial\O}\chi_D u_D \, \rd \H^{N-1} -
\int_\O |\nabla u_D|^p + |u_D|^p \, \rd \H^N \Big\}\\
& = & \int_{\partial\O}\chi_D u_D \, \rd \H^{N-1}\\
& \leq & I.
\end{eqnarray*}
Recall that $u_D=u_{\chi_D}.$ Therefore $\chi_D$ is a solution to \eqref{cruz}.
This completes the proof.
\end{proof}

\begin{ob}\label{marcico}
Note that in arguments in the proof of Theorem \ref{ferreyra}, using again the
Theorem \ref{valdano}, we can prove that
$$
\{t< u_D\}\subset D\subset\{t\le u_D\}
$$
where $t:=\inf\{s:\H^{N-1}(\{s<u_D\})<A\}.$ Therefore $u_D$ is constant on $\partial D.$
\end{ob}

\subsection{Domain Derivative}

In this subsection we compute the shape derivative of the functional
$\J(\chi_D)$ with respect to perturbations on the set $D$. We will consider
regular perturbations and assume that the set $D$ is a smooth subset of
$\partial \Omega$.

Then, by using the formula for the shape derivative, we deduce some necessary
conditions on a (regular) set $D$ in order for it to be optimal for $\J$ in the
$L^\infty$ setting.

Also, this formula could be used to derive algorithms in order to compute the
actual optimal set (cf. with \cite{FBGR}).

For the computation of the shape derivative, we use some ideas from \cite{GM}.

\medskip

We begin by describing the kind of variations that we are considering on the
set $D$. Let $V$ be a regular (smooth) vector field, globally Lipschitz, with
support in a neighborhood of $\partial\O$ such that $\langle V,\nu \rangle=0$
and let $\psi_t:\rn\to\rn$ be defined as the unique solution to
\begin{equation}\label{moreno}
\begin{cases}
\frac{\rd}{\rd t}\psi_t (x)=V(\psi_t(x)) & t>0,\\
\psi_0(x)= x & x\in \rn.
\end{cases}
\end{equation}

We have
$$
\psi_t(x)=x+tV(x)+ o(t) \quad \forall x\in\rn.
$$
Now, if $D\subset\partial\Omega$, we define $D_t :=
\psi_t(D)\subset\partial\O$.

\medskip

First, we compute the derivative at $t=0$ of the surface measure of the set
$D_t$. That is, we want to compute
$$
\frac{\rd}{\rd t} \H^{N-1}(D_t)\Big|_{t=0}.
$$

\begin{lem}\label{derivacion.area}
With the previous notation, if $D\subset \partial\O$ is a smooth (relatively)
open set, then
$$
\frac{\rd}{\rd t} \H^{N-1}(D_t)\Big|_{t=0} = \int_D \rm{div}V\, \rd\H^{N-1}.
$$
\end{lem}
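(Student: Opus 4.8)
The plan is to reduce the computation to a single integral over the fixed set $D$ by means of the change-of-variables formula (Proposition \ref{cambio.variables}) and then to differentiate under the integral sign. First I would observe that, since $V$ is tangent to $\partial\O$ (that is, $\langle V,\nu\rangle=0$ there), its flow $\psi_t$ maps $\partial\O$ onto itself; hence $D_t=\psi_t(D)\subset\partial\O$ and $\chi_{D_t}\circ\psi_t=\chi_D$. Applying Proposition \ref{cambio.variables} with $\Phi=\psi_t$ and $f=\chi_{D_t}$, and using $\psi_t(\partial\O)=\partial\O$, gives
\begin{equation*}
\H^{N-1}(D_t)=\int_{\partial\O}\chi_{D_t}\,\rd\H^{N-1}=\int_{\partial\O}(\chi_{D_t}\circ\psi_t)\,J_\tau(\psi_t)\,\rd\H^{N-1}=\int_{D}J_\tau(\psi_t)\,\rd\H^{N-1}.
\end{equation*}
Because $V$ is smooth and globally Lipschitz and $\overline D$ is compact, $t\mapsto J_\tau(\psi_t)$ is $C^1$ with derivative bounded uniformly on $D$, so I may differentiate under the integral sign; this reduces the whole statement to computing $\frac{\rd}{\rd t}J_\tau(\psi_t)\big|_{t=0}$ pointwise.

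For that computation I would expand $\psi_t(x)=x+tV(x)+o(t)$, so that $\psi_t'=I+tV'+o(t)$, and treat the two factors of $J_\tau(\psi_t)=|{}^T[\psi_t']^{-1}\nu|\,J(\psi_t)$ separately. For the ordinary Jacobian, Jacobi's formula gives $J(\psi_t)=\det(I+tV'+o(t))=1+t\,\operatorname{div}V+o(t)$. For the normal factor, from $[\psi_t']^{-1}=I-tV'+o(t)$ one gets ${}^T[\psi_t']^{-1}\nu=\nu-t\,{}^TV'\nu+o(t)$, and a first-order expansion of the Euclidean norm (using $|\nu|=1$ and $\langle {}^TV'\nu,\nu\rangle=\langle V'\nu,\nu\rangle$) yields $|{}^T[\psi_t']^{-1}\nu|=1-t\langle V'\nu,\nu\rangle+o(t)$. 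Multiplying,
\begin{equation*}
J_\tau(\psi_t)=1+t\big(\operatorname{div}V-\langle V'\nu,\nu\rangle\big)+o(t)=1+t\,\operatorname{div}_\tau V+o(t),
\end{equation*}
so that $\frac{\rd}{\rd t}J_\tau(\psi_t)\big|_{t=0}=\operatorname{div}_\tau V$.

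It then remains to identify $\operatorname{div}_\tau V$ with $\operatorname{div}V$ on $\partial\O$, and this is where the tangency hypothesis must be used in full strength. Writing $\nu$ for the unit normal field extended to a tubular neighborhood as $\nabla d$, with $d$ the signed distance, the condition $\langle V,\nu\rangle=0$ holds throughout the neighborhood; differentiating in the normal direction and using $\partial_\nu\nu=(\nu\cdot\nabla)\nabla d=\tfrac12\nabla|\nabla d|^2=0$ gives $\langle V'\nu,\nu\rangle=\langle\partial_\nu V,\nu\rangle=\partial_\nu\langle V,\nu\rangle=0$ on $\partial\O$. Hence $\operatorname{div}_\tau V=\operatorname{div}V$ there and
\begin{equation*}
\frac{\rd}{\rd t}\H^{N-1}(D_t)\Big|_{t=0}=\int_D \operatorname{div}_\tau V\,\rd\H^{N-1}=\int_D\operatorname{div}V\,\rd\H^{N-1}.
\end{equation*}
The main obstacle I anticipate is precisely this last reconciliation: the identity $\langle V'\nu,\nu\rangle=0$ fails for a field that is merely tangent \emph{along} $\partial\O$, so one must genuinely exploit that $V\perp\nu$ in a whole neighborhood (via the distance-function extension of $\nu$). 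The remaining ingredients—the first-order matrix expansion and the justification of differentiation under the integral—are routine.
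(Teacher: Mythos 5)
Your proof is correct and follows essentially the same route as the paper: pull the surface measure back to $D$ via Proposition \ref{cambio.variables} and expand $J_\tau(\psi_t)=1+t\,\mathrm{div}_\tau V+o(t)$ from $\psi_t'=I+tV'+o(t)$. The only place you go beyond the paper is the final identification $\mathrm{div}_\tau V=\mathrm{div}\,V$, which the paper dismisses with ``using that $\langle V,\nu\rangle=0$''; you rightly note that this step needs tangency of $V$ in a whole neighborhood of $\partial\Omega$ (so that $\langle V'\nu,\nu\rangle=\partial_\nu\langle V,\nu\rangle=0$ via the distance-function extension of $\nu$), and your justification of that point is a welcome sharpening of the argument.
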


\begin{proof}
We will use the following asymptotic formulae, for which the proofs can be found in
\cite{Henrot}:
\begin{align}
\label{jacobiano} J\psi_t(x) = 1 + t\,\rm{div} V(x) + o(t),\\
\label{inversa} [\psi_t^{-1}]'(x) = Id - tV(x) + o(t).
\end{align}

Then we have, by the change of variable formula, Proposition
\ref{cambio.variables},
$$
\H^{N-1}(D_t) = \int_{D_t} \rd \H^{N-1} = \int_D |[\psi_t^{-1}]'(x)\nu|
J\psi_t(x)\, \rd \H^{N-1}.
$$
Hence by \eqref{jacobiano}, \eqref{inversa} and the definition of $J_\tau$ we
get, using that $\langle V,\nu \rangle=0$,
$$
\H^{N-1}(D_t) = \H^{N-1}(D) + t \int_D \text{div}V\, \rd\H^{N-1} + o(t).
$$

Therefore, we arrive at
$$
\frac{\rd}{\rd t} \H^{N-1}(D_t)\Big|_{t=0}= \int_D \text{div}V\, \rd\H^{N-1}.
$$
This is what we wanted to show.\end{proof}

Now, let
$$
I(t)=\int_{\partial\O}u_t\chi_{D_t}\,\rd\H^{N-1},
$$
where $u_t\in \wp$ is the unique solution to
\begin{equation}\label{balbo}
\begin{cases}
-\Delta_p u_t + |u_t|^{p-2}u_t = 0 & \text{in }\O,\\
|\nabla u_t|^{p-2}\frac{\partial u_t}{\partial\nu} = \chi_{D_t} & \text{on
}\partial\O
\end{cases}
\end{equation}
and assume that $D\subset \partial\O$ is again a smooth (relatively) open set.

We have the following Lemma:
\begin{lem}\label{con}
Let $u_0$ and $u_t$ be the solution of \eqref{balbo} with $t=0$ and $t>0$,
respectively. Then
$$
u_t\to u_0 \textrm{ in } \wp, \textrm{ as } t\to 0^+.
$$
\end{lem}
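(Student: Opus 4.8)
The plan is to exploit the monotonicity of the $p$-Laplacian operator together with the continuity of the flow $\psi_t$, reducing the whole question to the decay of a single scalar quantity. First I would record the uniform a priori bound: testing \eqref{balbo} with $v=u_t$ gives $\int_\O |\nabla u_t|^p + |u_t|^p \, \rd\H^N = \int_{\partial\O}\chi_{D_t} u_t\,\rd\H^{N-1}$, and since $\|\chi_{D_t}\|_{\lqb}=\H^{N-1}(D_t)^{1/q}$ stays bounded near $A^{1/q}$, H\"older's inequality together with the trace embedding $\wp\hookrightarrow L^{q'}(\partial\O)$ yields $\|u_t\|_{\wp}\le C$ uniformly for small $t$, exactly as in the first part of the proof of Theorem \ref{messi}. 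In particular the traces of $u_t$ and $u_0$ are uniformly bounded in $L^{q'}(\partial\O)$.

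Next I would show that $\chi_{D_t}\to\chi_{D}$ in $\lqb$ as $t\to0^+$. Since $D$ is a smooth relatively open subset of $\partial\O$ and $\psi_t^{-1}(x)\to x$ as $t\to0^+$, for every $x$ in the relative interior (resp.\ exterior) of $D$ one has $x\in D_t$ (resp.\ $x\notin D_t$) once $t$ is small; as $\partial D$ is $\H^{N-1}$-null, $\chi_{D_t}\to\chi_D$ pointwise a.e., and dominated convergence gives $\H^{N-1}(D_t\triangle D)=\|\chi_{D_t}-\chi_D\|_{\lqb}^q\to0$.

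The heart of the argument is an algebraic identity. Testing the four weak formulations (the equation for $u_t$ against $v=u_t$ and $v=u_0$, and the equation for $u_0$ against $v=u_t$ and $v=u_0$) and expanding, one finds that
\[
Q_t := \int_\O \big(|\nabla u_t|^{p-2}\nabla u_t - |\nabla u_0|^{p-2}\nabla u_0\big)\cdot(\nabla u_t-\nabla u_0) + \big(|u_t|^{p-2}u_t - |u_0|^{p-2}u_0\big)(u_t-u_0)\,\rd\H^N
\]
equals $\int_{\partial\O}(\chi_{D_t}-\chi_D)(u_t-u_0)\,\rd\H^{N-1}$, because all the interior terms collapse onto the boundary data. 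By H\"older's inequality and the uniform trace bound from the first step, $|Q_t|\le \|\chi_{D_t}-\chi_D\|_{\lqb}\,\|u_t-u_0\|_{L^{q'}(\partial\O)}\le C\|\chi_{D_t}-\chi_D\|_{\lqb}\to 0$.

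Finally I would invoke the classical monotonicity inequalities for $\xi\mapsto|\xi|^{p-2}\xi$ to convert $Q_t\to0$ into strong convergence. When $p\ge2$ one has $(|\xi|^{p-2}\xi-|\eta|^{p-2}\eta)\cdot(\xi-\eta)\ge c_p|\xi-\eta|^p$, so $Q_t\ge c_p\|u_t-u_0\|_{\wp}^p$ and the conclusion is immediate. The genuinely delicate range is $1<p<2$, where only the weaker bound $(|\xi|^{p-2}\xi-|\eta|^{p-2}\eta)\cdot(\xi-\eta)\ge c_p|\xi-\eta|^2/(|\xi|+|\eta|)^{2-p}$ is available; there I would recover $\|u_t-u_0\|_{\wp}^p\le C\, Q_t^{p/2}$ through a H\"older interpolation, using once more the uniform $\wp$ bound to control $\int(|\nabla u_t|+|\nabla u_0|)^p$ and $\int(|u_t|+|u_0|)^p$. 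This subquadratic estimate is the main obstacle; everything else is soft. Combining the two cases, $Q_t\to0$ forces $u_t\to u_0$ in $\wp$, which is the assertion.
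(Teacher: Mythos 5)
Your proof is correct and is essentially the argument the paper relies on: the paper's ``proof'' is only a citation of Lemma 4.2 of Cuccu--Emamizadeh--Porru (with the trace inequality replacing Poincar\'e), and that lemma is proved by exactly your scheme --- subtract the two weak formulations tested against $u_t-u_0$, bound the resulting boundary term by $\|\chi_{D_t}-\chi_D\|_{L^q(\partial\Omega)}\,\|u_t-u_0\|_{L^{q'}(\partial\Omega)}$, and conclude via the standard monotonicity inequalities for $\xi\mapsto|\xi|^{p-2}\xi$, treating $p\ge 2$ and $1<p<2$ separately. Your write-up moreover supplies the step the paper leaves implicit, namely that $\H^{N-1}(D_t\triangle D)\to 0$ because $\psi_t\to \mathrm{Id}$ and the relative boundary $\partial D$ is $\H^{N-1}$-null.
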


\begin{proof}
The proof follows exactly as the one in Lemma 4.2 in \cite{cuccu}. The only
difference being that we use the trace inequality instead of the Poincar\'e
inequality.
\end{proof}

\begin{ob}\label{remark.conver}
It is easy to see that, as $\psi_t\to Id$ in the $C^1$ topology, then from
Lemma \ref{con} it follows that
$$
w_t:=u_t\circ \psi_t \to u_0 \qquad \text{strongly in } W^{1,p}(\O).
$$
\end{ob}

Now, we arrive at the main result of the section.

\begin{te}\label{teo.derivada}
With the previous notation, if $D\subset \partial\O$ is a smooth (relatively)
open set, we have that $I(t)$ is differentiable at $t=0$ and
$$
\frac{\rd}{\rd t}I(t)\Big|_{t=0} = \frac{p}{p-1}\int_{\partial D}u_0 \langle V,
\nu_\tau\rangle\,\rd\H^{N-2},
$$
where $u_0$ is the solution of \eqref{balbo} with $t=0$ and $\nu_\tau$ stands
for the exterior unit normal vector to $D$ along $\partial\Omega.$
\end{te}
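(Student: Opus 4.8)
The plan is to freeze the domain: I pull everything back through the flow $\psi_t$ so that all the $t$-dependence sits in explicit, smooth Jacobian factors, and then exploit the variational characterization \eqref{rojas} of $u_t$ to differentiate. Set $w_t:=u_t\circ\psi_t$, so $w_0=u_0$ and, by Remark \ref{remark.conver}, $w_t\to u_0$ strongly in $\wp$ as $t\to0^+$. Since $\langle V,\nu\rangle=0$, the flow carries $\partial\O$ into itself and $\chi_{D_t}\circ\psi_t=\chi_D$, so Proposition \ref{cambio.variables} gives
$$
I(t)=\int_{\partial\O}u_t\chi_{D_t}\,\rd\H^{N-1}=\int_D w_t\,J_\tau(\psi_t)\,\rd\H^{N-1}.
$$
Carrying out the same change of variables in the volume integrals (here $\rd x=J\psi_t\,\rd y$, and $\nabla(v\circ\psi_t^{-1})$ produces the matrix $A_t:={}^{T}[\psi_t']^{-1}$), one checks that $w_t$ maximizes over $\wp$ the pulled-back functional
$$
\tilde\I_t(v)=\frac{1}{p-1}\Big\{p\int_D v\,J_\tau(\psi_t)\,\rd\H^{N-1}-\int_\O\big(|A_t\nabla v|^p+|v|^p\big)\,J\psi_t\,\rd\H^N\Big\},
$$
and that $I(t)=\tilde\I_t(w_t)$. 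The factor $J\psi_t$ in front of $|A_t\nabla v|^p$ is essential and must not be dropped.

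Next comes an envelope argument. Since $w_0$ is a competitor for $\tilde\I_t$ and $w_t$ for $\tilde\I_0$, maximality gives $\tilde\I_t(w_0)\le\tilde\I_t(w_t)=I(t)$ and $\tilde\I_0(w_t)\le\tilde\I_0(w_0)=I(0)$, so for $t>0$
$$
\frac{\tilde\I_t(w_0)-\tilde\I_0(w_0)}{t}\le\frac{I(t)-I(0)}{t}\le\frac{\tilde\I_t(w_t)-\tilde\I_0(w_t)}{t}.
$$
For fixed $v$ the map $s\mapsto\tilde\I_s(v)$ is $C^1$, and its $s$-derivative depends continuously on $v\in\wp$ because the coefficients $J\psi_s$, $J_\tau(\psi_s)$, $A_s$ are smooth and bounded ($V$ being globally Lipschitz). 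Writing the right quotient as $\frac1t\int_0^t\frac{\rd}{\rd s}\tilde\I_s(w_t)\,\rd s$ and invoking $w_t\to u_0$ in $\wp$, both outer terms converge to $\frac{\rd}{\rd t}\tilde\I_t(u_0)\big|_{t=0}$; squeezing yields that $I$ is differentiable at $0$ with $I'(0)=\frac{\rd}{\rd t}\tilde\I_t(u_0)\big|_{t=0}$.

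It remains to evaluate and simplify this derivative. From \eqref{jacobiano}, $\frac{\rd}{\rd t}J\psi_t\big|_{0}=\mathrm{div}\,V$; combining \eqref{jacobiano}--\eqref{inversa} with $\langle V,\nu\rangle=0$ exactly as in Lemma \ref{derivacion.area} gives $\frac{\rd}{\rd t}J_\tau(\psi_t)\big|_{0}=\mathrm{div}_\tau V$; and $\frac{\rd}{\rd t}A_t\big|_{0}=-{}^{T}V'$, whence $\frac{\rd}{\rd t}|A_t\nabla v|^p\big|_{0}=-p|\nabla v|^{p-2}\langle V'\nabla v,\nabla v\rangle$. Differentiating $\tilde\I_t(u_0)$ term by term therefore gives
$$
I'(0)=\frac{p}{p-1}\int_D u_0\,\mathrm{div}_\tau V\,\rd\H^{N-1}+\frac{p}{p-1}T_1-\frac{1}{p-1}T_3-\frac{1}{p-1}T_2,
$$
where $T_1:=\int_\O|\nabla u_0|^{p-2}\langle V'\nabla u_0,\nabla u_0\rangle$, $T_2:=\int_\O|u_0|^p\,\mathrm{div}\,V$ and $T_3:=\int_\O|\nabla u_0|^p\,\mathrm{div}\,V$.

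Finally I would collapse the volume terms using the equation. Because $\langle V,\nu\rangle=0$, integrating $\mathrm{div}(|u_0|^pV)$ over $\O$ yields $T_2=-p\int_\O|u_0|^{p-2}u_0\langle\nabla u_0,V\rangle$, and an analogous integration of $\mathrm{div}(|\nabla u_0|^pV)$ gives $\int_\O|\nabla u_0|^{p-2}\langle\nabla u_0,\nabla\langle\nabla u_0,V\rangle\rangle=T_1-\tfrac1pT_3$. Testing the weak form of \eqref{balbo} at $t=0$ with $\varphi=\langle\nabla u_0,V\rangle$ and feeding in these two identities produces $T_1=\tfrac1pT_2+\tfrac1pT_3+\int_D\langle\nabla u_0,V\rangle\,\rd\H^{N-1}$. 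Substituting, the contributions of $T_2$ and $T_3$ cancel and one is left with
$$
I'(0)=\frac{p}{p-1}\int_D\big(u_0\,\mathrm{div}_\tau V+\langle\nabla u_0,V\rangle\big)\,\rd\H^{N-1}.
$$
Since $V$ is tangential, $\mathrm{div}_\tau(u_0V)=u_0\,\mathrm{div}_\tau V+\langle\nabla u_0,V\rangle$ and $\langle u_0V,\nu\rangle=0$, so the tangential divergence theorem applied to $W=u_0V$ on $D$ kills the mean-curvature term and returns exactly $\frac{p}{p-1}\int_{\partial D}u_0\langle V,\nu_\tau\rangle\,\rd\H^{N-2}$, as claimed. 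I expect the main obstacle to be the rigor of this last step: testing \eqref{balbo} with $\varphi=\langle\nabla u_0,V\rangle$ presupposes $\langle\nabla u_0,V\rangle\in\wp$, i.e. enough regularity of $u_0$ (elliptic regularity for the $p$-Laplacian, or an approximation of $V$ followed by passage to the limit) near $\partial D$; meanwhile the strong $\wp$-convergence of Remark \ref{remark.conver} is what makes the envelope passage to the limit legitimate, and is the other point requiring care.
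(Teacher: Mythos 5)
Your proposal is correct and follows essentially the same route as the paper: pull back through $\psi_t$, use the variational characterization \eqref{rojas} together with $w_t\to u_0$ in a two-sided envelope/squeeze argument to get $I'(0)$ as the $t$-derivative of the frozen functional at $u_0$, and then collapse the volume terms by testing the weak equation with $\langle\nabla u_0,V\rangle$ and applying the tangential divergence theorem. The regularity issue you flag at the end is exactly the one the paper addresses, via the regularized problems \eqref{regularized} whose $C^{2,\rho}$ solutions justify the formal computation before passing to the limit $\ep\to0^+$.
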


\begin{proof}
By \eqref{rojas} we have that
$$
I(t)=\sup_{v\in\wp}\frac{1}{p-1}\bigg\{
p\int_{\partial\O}v\chi_{D_t}\,\rd\H^{N-1}-\int_\O |\nabla v|^p + |v|^p\,\rd
\H^N\bigg\}.
$$
Given $v\in\wp$ we consider $u=v\circ\psi_t\in\wp,$ then, by the change of
variables formula, Proposition \ref{cambio.variables},
\begin{align*}
\int_{\partial\O}v\chi_{D_t}\,\rd\H^{N-1} =&
\int_{\partial\O}u\chi_D J_\tau \psi_t\,\rd\H^{N-1}\\
= &
\int_{\partial\O}u\chi_D\,\rd\H^{N-1}+t\int_{\partial\O}u\chi_D\textrm{div}_\tau
V \,\rd\H^{N-1}+o(t).
\end{align*}

Also, by the usual change of variables formula, we have
\begin{align*}
\int_\O |\nabla v|^p\, \rd\H^N &=  \int_\O |^{T}[\psi_t']^{-1}(x) \nabla u^T|^p J\psi_t\,\rd \H^N\\
&=  \int_\O |(I-t ^T V'+ o(t)) \nabla u^T|^p \{1+t\textrm{div}V+o(t)\}\, \rd \H^N\\
&=  \int_\O\{|\nabla u|^p - tp|\nabla u|^{p-2}\langle\nabla u, ^T V'\nabla u^T\rangle +o(t)\}
\{1 + t\textrm{div}V + o(t)\}\, \rd \H^N\\
&=  \int_\O |\nabla u|^p \, \rd\H^N + t \int_\O |\nabla u|^p \textrm{div}V\, \rd\H^N\\
& - tp \int_\O |\nabla u|^{p-2} \langle\nabla u, ^T V'\nabla u^T\rangle \rd
\H^N +o(t),
\end{align*}
and
$$
\int_\O |v|^p\, \rd\H^N = \int_\O |u|^p J\psi_t\, \rd\H^N = \int_\O |u|^p\,
\rd\H^N + t \int_\O |u|^p\textrm{div}V\, \rd\H^N + o(t).
$$
Then, for all $v\in\wp$ we have that
\begin{align*}
p \int_{\partial\O}v\chi_{D_t}\, &\rd\H^{N-1} - \int_\O |\nabla v|^p + |v|^p\,\rd \H^N\\
= & p\int_{\partial\O}u\chi_{D}\,\rd\H^{N-1} - \int_\O |\nabla u|^p + |u|^p\,\rd \H^N\\
& + t\bigg[p\int_{\partial\O} u \chi_D\textrm{div}_\tau V\,\rd\H^N-\int_\O (|\nabla u|^p + |u|^p)
\textrm{div}V \, \rd\H^N\\
& + p\int_\O|\nabla u|^{p-2}\langle\nabla u, ^T V' \nabla u^T\rangle\, \rd
\H^N\bigg] + o(t).
\end{align*}
Therefore, we can rewrite $I(t)$ as
$$
I(t) = \sup_{u\in\wp}\frac{1}{p-1}\{\varphi(u) + t\phi(u) +o(t)\},
$$
where
$$
\varphi(u) = p\int_{\partial\O} u\chi_{D}\, \rd\H^{N-1} - \int_\O |\nabla u|^p
+ |u|^p\, \rd \H^N
$$
and
\begin{align*}
\phi(u) = & p\int_{\partial\O} u \chi_D\textrm{div}_\tau V\, \rd \H^{N-1} -
\int_\O (|\nabla u|^p + |u|^p) \textrm{div}V \, \rd\H^N\\
& + p \int_\O |\nabla u|^{p-2}\langle\nabla u, ^T V' \nabla u^T\rangle\, \rd
\H^N.
\end{align*}

If we define $w_t=u_t\circ\psi_t$ for all $t$ we have that $w_0 = u_0$ and
$$
I(t) = \frac{1}{p-1}\{\varphi(w_t) + t\phi(w_t) +o(t)\}
$$
for all $t$. Thus
$$
I(t)-I(0)\geq\frac{1}{p-1}\{ \varphi(u_0) + t\phi(u_0) + o(t)\} -
\frac{1}{p-1}\varphi(u_0),
$$
then
\begin{equation}\label{alsamendi}
\liminf_{t\to 0^+}\frac{I(t)-I(0)}{t}\geq\frac{1}{p-1}\phi(u_0).
\end{equation}

On the other hand
$$
I(t)-I(0)\leq\frac{1}{p-1}\{ \varphi(w_t) + t\phi(w_t) + o(t)\} -
\frac{1}{p-1}\varphi(w_t),
$$
hence,
$$
\frac{I(t)-I(0)}{t}\leq\frac{1}{p-1}\phi(w_t) + \frac{1}{t}o(t).
$$
By Remark \ref{remark.conver},
$$
\phi(w_t)\to\phi(u_0) \quad \textrm{as } t\to 0^+,
$$
therefore,
\begin{equation}\label{milito}
\limsup_{t\to 0^+}\frac{I(t)-I(0)}{t}\leq\frac{1}{p-1}\phi(u_0).
\end{equation}

From \eqref{alsamendi} and \eqref{milito} we deduced that there exists $I'(0)$
and
\begin{eqnarray*}
I'(0) & = & \frac{1}{p-1}\phi(u_0)\\
& = & \frac{1}{p-1} \bigg\{p\int_{\partial\O} u_0 \chi_D\textrm{div}_\tau V\,
\rd\H^{N-1} + p \int_\O |\nabla u_0|^{p-2}\langle\nabla u_0, ^T V' \nabla u_0^T\rangle\, \rd \H^N\\
& &\qquad \qquad - \int_\O (|\nabla u_0|^p + |u_0|^p)\textrm{div}V \,
\rd\H^N\bigg\}.
\end{eqnarray*}


Now we try to find a more explicit formula for $I'(0)$.

In the course of the computations, we require the solution $u_0$ to
$$
\begin{cases}
-\Delta u_0 + |u_0|^{p-2} u_0 = 0 & \mbox{in }\Omega,\\
|\nabla u_0|^{p-2}\frac{\partial u_0}{\partial \nu} = \chi_{D} & \mbox{on }
\partial\Omega,
\end{cases}
$$
to be $C^2$. However, this is not true. As it is well known (see, for instance,
\cite{16}), $u_0$ belongs to the class $C^{1,\delta}$ for some $0<\delta<1$.

In order to overcome this difficulty, we proceed as follows. We consider the
regularized problems
\begin{equation}\label{regularized}
\begin{cases}
-{\rm div}( (|\nabla u_0^\ep|^2 + \ep^2)^{(p-2)/2}\nabla u^\ep_0) + |u^\ep_0|^{p-2} u^\ep_0 = 0 & \mbox{in }\Omega,\\
(|\nabla u^\ep_0|^2 + \ep^2)^{(p-2)/2}\frac{\partial u^\ep_0}{\partial \nu} =
\chi_{D} & \mbox{on }\partial\Omega.
\end{cases}
\end{equation}
It is well known that the solution $u_0^\ep$ to \eqref{regularized} is of class
$C^{2,\rho}$ for some $0<\rho<1$ (see \cite{LSU}).

Then, we can perform all of our computations with the functions $u_0^\ep$ and
pass to the limit as $\ep\to 0+$ at the end.

We have chosen to work formally with the function $u_0$ in order to make our
arguments more transparent and leave the details to the reader. For a similar
approach, see \cite{GM}.

\medskip

Now, since
\begin{eqnarray*}
\textrm{div}(|u_0|^pV) & = & p|u_0|^{p-2}u_0\langle\nabla u_0, V\rangle + |u_0|^p\textrm{div}V,\\
\textrm{div}(|\nabla u_0|^pV)&=&p|\nabla u_0|^{p-2}\langle\nabla u_0 D^2 u_0,
V\rangle + |\nabla u_0|^p \textrm{div}V,
\end{eqnarray*}
we obtain
\begin{eqnarray*}
I'(0) & = & \frac{1}{p-1} \bigg\{p\int_{\partial\O} u_0 \chi_D\textrm{div}_\tau
V\,\rd\H^{N-1} + p\int_\O |\nabla u_0|^{p-2}\langle\nabla u_0, ^T V' \nabla u_0^T\rangle\, \rd \H^N\\
& & -\int_\O \textrm{div}((|\nabla u_0|^p + |u_0|^p)V)\, \rd\H^N
+ p\int_\O |\nabla u_0|^{p-2}\langle \nabla u_0 D^2 u_0, V\rangle\, \rd\H^N\\
& & + p\int_\O |u_0|^{p-2}u_0 \langle \nabla u_0, V\rangle \, \rd \H^N\bigg\}.\\
\end{eqnarray*}
Hence, using that $\langle V, \nu\rangle=0$ in the right hand side of the above
equality we find
\begin{eqnarray*}
I'(0) & = & \frac{p}{p-1}\bigg\{\int_{\partial\O} u_0 \chi_D\textrm{div}_\tau V\, \rd\H^{N-1}\\
& & + \int_\O |\nabla u_0|^{p-2} \langle \nabla u_0, ^T V' \nabla u_0^T + D^2 u_0 V^T\rangle\, \rd \H^N\\
& & + \int_\O |u_0|^{p-2}u_0 \langle \nabla u_0, V\rangle \, \rd \H^N\bigg\}\\
& = & \frac{p}{p-1} \bigg\{\int_{\partial\O} u_0 \chi_D\textrm{div}_\tau V\,
\rd\H^{N-1} + \int_\O |\nabla u_0|^{p-2} \langle \nabla u_0, \nabla(\langle\nabla u_0, V\rangle)\rangle\, \rd \H^N\\
& & + \int_\O |u_0|^{p-2}u_0 \langle \nabla u_0, V\rangle \, \rd \H^N\bigg\}.
\end{eqnarray*}
Since $u_0$ is a week solution of \eqref{balbo} with $t=0$ we have
\begin{eqnarray*}
I'(0) & = & \frac{p}{p-1}\bigg\{\int_{\partial\O} u_0 \chi_D\textrm{div}_\tau
V\, \rd\H^{N-1} + \int_{\partial\O} \langle \nabla u_0, V\rangle \chi_{D}\, \rd\H^{N-1}\bigg\}\\
& = & \frac{p}{p-1} \int_{\partial\O} \textrm{div}_\tau(u_0 V) \chi_D\, \rd\H^{N-1}\\
& = & \frac{p}{p-1} \int_{\partial D} u_0 \langle V,
\nu_\tau\rangle\,\rd\H^{N-2}.
\end{eqnarray*}
This completes the proof.
\end{proof}

The following corollary is a result that we have already observed, actually
under weaker assumptions on D, in Remark \ref{marcico}.

Nevertheless, we have chosen to include this remark as a direct application of
the Lemma \ref{derivacion.area} and Theorem \ref{teo.derivada}.

\begin{co}
Let $\chi_D$ be a maximizer for $\J$ over the class $\mathbf{B}$ and assume
that $D\subset \partial\Omega$ is a smooth (relatively) open set. Let $u_D$ be
the solution to the associated state equation
$$
\begin{cases}
-\Delta_p u + |u|^{p-2}u = 0 & \mbox{in }\Omega,\\
|\nabla u|^{p-2}\frac{\partial u}{\partial \nu} = \chi_D & \mbox{on
}\partial\Omega.
\end{cases}
$$
Then, $u_D$ is constant along $\partial D$.
\end{co}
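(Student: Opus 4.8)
The plan is to derive the result from the first-order optimality condition for the constrained maximization, feeding into it the two variational formulas already in hand: the area derivative of Lemma~\ref{derivacion.area} and the shape derivative of Theorem~\ref{teo.derivada}.

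First I would record the admissibility constraint in differentiated form. Since $\chi_D\in\mathbf{B}$ and the only active constraint is $\int_{\partial\O}\chi_D\,\rd\H^{N-1}=\H^{N-1}(D)=A$, a competitor $\chi_{D_t}$ produced by a tangential flow $\psi_t$ (so $\langle V,\nu\rangle=0$) is admissible exactly when $\H^{N-1}(D_t)=A$. Combining Lemma~\ref{derivacion.area} with the tangential divergence theorem of Section~2 (the mean-curvature term dropping because $\langle V,\nu\rangle=0$), the constraint is preserved to first order precisely when
$$
\frac{\rd}{\rd t}\H^{N-1}(D_t)\Big|_{t=0}=\int_{\partial D}\langle V,\nu_\tau\rangle\,\rd\H^{N-2}=0 .
$$

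Next comes the optimality step, which I expect to be the main obstacle. A genuine variational argument needs admissible competitors of both signs for small $t$, i.e. $\H^{N-1}(D_t)=A$ \emph{exactly}, not merely to first order. To achieve this I would fix an auxiliary tangential field $Z$ with $\int_{\partial D}\langle Z,\nu_\tau\rangle\,\rd\H^{N-2}=1$ (such a field exists because $\langle V,\nu_\tau\rangle|_{\partial D}$ can be prescribed to be an arbitrary smooth function on the compact manifold $\partial D$), flow along $tV+sZ$, and solve $\H^{N-1}(D_{t,s(t)})=A$ by the implicit function theorem with $s(0)=0$ and $s'(0)=0$. The corrected family $\tilde D_t$ then lies in $\mathbf{B}$, so $t\mapsto\J(\chi_{\tilde D_t})$ has an interior maximum at $t=0$; since $s'(0)=0$, the chain rule together with Theorem~\ref{teo.derivada} shows its derivative equals $I'(0)$, whence
$$
I'(0)=\frac{p}{p-1}\int_{\partial D}u_0\,\langle V,\nu_\tau\rangle\,\rd\H^{N-2}=0
$$
for every tangential $V$ with $\int_{\partial D}\langle V,\nu_\tau\rangle\,\rd\H^{N-2}=0$.

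Finally I would conclude by an orthogonality argument. As $V$ ranges over tangential fields, $\langle V,\nu_\tau\rangle|_{\partial D}$ ranges over all smooth functions $g$ on $\partial D$, so the previous line says $\int_{\partial D}u_0\,g\,\rd\H^{N-2}=0$ for every $g$ with $\int_{\partial D}g\,\rd\H^{N-2}=0$. Testing with $g=u_0-\bar u_0$, where $\bar u_0$ denotes the average of $u_0$ over $\partial D$, gives $\int_{\partial D}(u_0-\bar u_0)^2\,\rd\H^{N-2}=0$, so $u_0=u_D$ is constant on $\partial D$, as claimed. The only delicate point is the optimality step; the remaining manipulations are routine once the two derivative formulas are available.
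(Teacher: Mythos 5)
Your proposal is correct and follows essentially the same route as the paper: the first-order optimality condition for the constrained problem, combined with Lemma \ref{derivacion.area} for the area derivative and Theorem \ref{teo.derivada} for the shape derivative, forces $\int_{\partial D} u_0\,\langle V,\nu_\tau\rangle\,\rd\H^{N-2}$ to be proportional to $\int_{\partial D}\langle V,\nu_\tau\rangle\,\rd\H^{N-2}$, whence $u_0$ is constant on $\partial D$. The paper states this as a one-line Lagrange-multiplier identity, $I'(0)=c\,\frac{\rd}{\rd t}\H^{N-1}(D_t)\big|_{t=0}$, whereas you supply the details it omits (the implicit-function-theorem correction producing genuinely measure-preserving competitors, and the orthogonality argument with $g=u_0-\bar u_0$); these additions are sound and strengthen, rather than alter, the argument.
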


\begin{proof}
Recalling the formula for the derivative of the volume, that is,
$$
\frac{\rd}{\rd t} \H^{N-1}(D_t)\Big|_{t=0}= \int_D \text{div}_\tau V\,
\rd\H^{N-1}=\int_{\partial D} \langle V, \nu_\tau\rangle \, \rd\H^{N-2},
$$
and the fact that $D$ is a critical point of $I,$ we derive
$$
I'(0)=c\frac{\rd}{\rd t} \H^{N-1}(D_t)\Big|_{t=0}\Longleftrightarrow
u=constant, \textrm{ on } \partial D.
$$
As we wanted to prove.
\end{proof}

\subsection{Final comments}

It would be interesting to say more about optimal configurations. For instance:
\begin{itemize}
\item What is the topology of optimal sets? Are optimal sets connected?

\item What about the regularity of optimal sets? Is it true that the boundary
of optimal sets are regular surfaces?

\item Where are the optimal sets located?
\end{itemize}

These questions, we believe, are difficult ones and we can only give an
answer in the trivial case where the domain $\Omega$ is a ball. In this case,
by symmetrization arguments (by means of the {\em spherical symmetrization},
cf. with \cite{FBRW, Sp}) it is straight forward to check that optimal sets are
spherical caps.

This example also shows that the uniqueness problem is far from obvious.


\begin{thebibliography}{99}

\bibitem{1} D. Arcoya and J.I. Diaz. {\it S-shaped bifurcation branch in a quasilinear
multivalued model arising in climatology}. J. Differential Equations, 150
(1998), 215--225.

\bibitem{2} C. Atkinson and K. El Kalli. {\it Some boundary value problems for the
Bingham model}. J. Non-Newtonian Fluid Mech. 41 (1992), 339--363.

\bibitem{3} C. Atkinson and C.R. Champion. {\it On some boundary value problems for the
equation $\nabla(F(|\nabla w|)\nabla w) = 0$}. Proc. R. Soc. London A, 448
(1995), 269--279.

\bibitem{Burton} G. R. Burton and J.B. McLeod. {\it Maximisation and minimisation
on classes of rearrangements}. Proc. Roy. Soc. Edinburgh Sect. A. 119 (3-4):
287--300 (1991).

\bibitem{Cherkaev} A. Cherkaev and E. Cherkaeva. {\it Optimal design for uncertain
loading condition}. Homogenization, 193--213, Ser. Adv. Math. Appl. Sci., 50,
World Sci. Publishing, River Edge, NJ, 1999.

\bibitem{cuccu} F. Cuccu, B. Emamizadeh and G. Porru. {\it Nonlinear
elastic membranes involving the $p$-Laplacian operator}. Electron. J.
Differential Equations 2006, No. 49, 10 pp.

\bibitem{DPFBR} L. Del Pezzo, J. Fern\'andez Bonder and J.D. Rossi. {\it An optimization
problem for the first Steklov eigenvalue of a nonlinear problem}. Differential
Integral Equations 19 (2006), no. 9, 1035--1046.

\bibitem{6} J.I. Diaz. {\it Nonlinear partial differential equations and free boundaries}.
Pitman Publ. Program 1985.

\bibitem{7} J.F. Escobar. {\it Uniqueness theorems on conformal deformations of metrics,
Sobolev inequalities, and an eigenvalue estimate}. Comm. Pure Appl. Math., 43
(1990), 857--883.

\bibitem{FBGR} J. Fern\'andez Bonder, P. Groisman and J.D. Rossi. {\it Optimization
of the first Steklov eigenvalue in domains with holes: A shape derivative
approach}. Ann. Mat. Pura Appl., 186 (2007), no. 2, 341--358.

\bibitem{FBR} J. Fern\'andez Bonder and J.D. Rossi. {\it On the existence
of extremals for the Sobolev trace embedding theorem with critical exponent}.
Bull. London Math. Soc. 37 (2005), no. 1, 119--125.

\bibitem{FBRW} J. Fern\'andez Bonder, J.D. Rossi and N. Wolanski. {\it On the best Sobolev
trace constant and extremals in domains with holes}. Bull. Sci. Math. 130
(2006), no. 7, 565--579.

\bibitem{GM} J. Garc\'{\i}a Meli\'a, J. Sabina de Lis. {\it On the perturbation
of eigenvalues for the $p$-Laplacian}. C. R. Acad. Sci. Paris S\'er. I Math. 332
(2001), no. 10, 893--898.

\bibitem{Henrot} A. Henrot and M. Pierre. Variation et optimization de forme: un analyse
g\'eom\'etric. Mathematics and Applications {\bf 48}, Springer, 2005.

\bibitem{Kurata} K. Kurata, M. Shibata and S. Sakamoto. {\it Symmetry-breaking
phenomena in an optimization problem for some nonlinear elliptic equation}.
Appl. Math. Optim. 50 (2004), no. 3, 259--278.

\bibitem{LSU} O.A. Ladyzhenskaya, V.A. Solonnikov, N.N. Ural'tseva,
Linear and Quasilinear Equations of Parabolic Type, Transl. Math. Monographs,
Vol. 23, Amer. Math. Soc., Providence, R.I., 1968.

\bibitem{ll} E. Lieb; M. Loss. \emph{Analysis}, Amer. Math. Soc. (1997).

\bibitem{Sp} E. Sperner, {\it Spherical symmetrization and eigenvalue estimates}. Math. Z.,
Vol. 176 (1981), 75--86.

\bibitem{16} P. Tolksdorf. {\it Regularity for a more general class of quasilinear
elliptic equations}. J. Differential Equations, 51 (1984), 126--150.

\end{thebibliography}
\end{document}